\documentclass[12pt]{amsart}
\usepackage{amssymb}
\usepackage{amsmath}
\usepackage{amsthm}
\usepackage{amsfonts}
\usepackage{enumerate}
\usepackage{url}

\makeatletter
\@namedef{subjclassname@2010}{%
  \textup{2010} Mathematics Subject Classification}
\makeatother

\newtheorem{theorem}{Theorem}[section]
\newtheorem{conjecture}[theorem]{Conjecture}
\newtheorem{corollary}[theorem]{Corollary}
\newtheorem{lemma}[theorem]{Lemma}
\newtheorem{hypothesis}[theorem]{Hypothesis}

\theoremstyle{definition}

\newtheorem*{xrem}{Remark}


\frenchspacing

\textwidth=13.5cm
\textheight=23cm
\parindent=16pt
\oddsidemargin=-0.5cm
\evensidemargin=-0.5cm
\topmargin=-0.5cm

\begin{document}

\baselineskip=17pt

\title[Lower bound hypothesis]{The $3x+1$ problem: a lower bound hypothesis}

\author[O. Rozier]{Olivier Rozier}
\address{237, rue de Romainville\\
93230 Romainville, France}
\email{olivier.rozier@gmail.com}

\date{}

\begin{abstract}
Much work has been done attempting to understand the dynamic behaviour of the so-called ``$3x+1$" function. It is known that finite sequences of iterations with a given length and a given number of odd terms have some combinatorial properties modulo powers of two. In this paper, we formulate a new hypothesis asserting that the first terms of those sequences have a lower bound which depends on the binary entropy of the ``ones-ratio". It is in agreement with all computations so far. Furthermore it implies accurate upper bounds for the total stopping time and the maximum excursion of an integer. Theses results are consistent with two previous stochastic models of the $3x+1$ problem.
\end{abstract}

\subjclass[2010]{11B37, 11B75, 60G50, 94A17}

\keywords{$3x+1$ problem, Collatz conjecture, parity vector, ones-ratio, binary entropy function, maximum excursion}

\maketitle

\section{Introduction}
\label{sec:intro}
Let us consider the $T$ function acting on the set of positive integers and defined by
\begin{equation}
T(n) = \begin{cases}
\frac{3n+1}2 &\text{if $n$ is odd,}\\
\frac n2 &\text{otherwise.}
\end{cases}
\end{equation}
It is expected but not yet proved that, whatever the initial value of $n$, the repeated iterations of $T$ reach the value 1 at some point, thus entering the infinite loop $1, 2, 1, 2, \ldots$ called the \emph{trivial cycle}. This question is notoriously intractable, despite its simple statement, and has received various names like the $3x+1$ \emph{problem}, the \emph{Syracuse problem} or the \emph{Collatz conjecture} \cite{Lag10}.

\begin{conjecture}{\em ($3x+1$ problem)}\label{conj:T} 
For any integer $n > 0$, we have $T^{(j)}(n) = 1$ for some $j \geq 0$,
where $T^{(j)}$ denotes the $j$-th iterate of $T$.
\end{conjecture}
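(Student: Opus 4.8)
The plan is to establish the conjecture by strong induction on $n$, thereby reducing the global statement to a local descent property. The base value is immediate, since $T^{(0)}(1) = 1$ (and indeed $T(1)=2$, $T(2)=1$, so the orbit of $1$ is the trivial cycle). For the inductive step, suppose every positive integer smaller than $n$ reaches $1$ under iteration of $T$; it then suffices to prove that the orbit of $n$ eventually drops below $n$, i.e. that the stopping time
\begin{equation}
\sigma(n) = \min\{\, k \geq 1 : T^{(k)}(n) < n \,\}
\end{equation}
is finite for every $n > 1$. Indeed, if $T^{(\sigma(n))}(n) = m < n$, then $m$ reaches $1$ by the induction hypothesis, and hence so does $n$. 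The whole difficulty is thereby transferred to proving that no trajectory can increase indefinitely before falling below its starting value.

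To control the growth I would follow the orbit through its parity vector. Writing the first $k$ iterates of $n$ and recording at each step whether the current value is odd, one obtains a binary word whose number $a$ of ones (the odd steps) governs the multiplicative behaviour: each odd step contributes a factor close to $3/2$ and each even step a factor $1/2$, so that
\begin{equation}
T^{(k)}(n) \;=\; \frac{3^{a}}{2^{k}}\, n \;+\; \varepsilon,
\end{equation}
where $\varepsilon$ collects the additive corrections produced by the $+1$ in the odd rule. Descent below $n$ is then essentially equivalent to the \emph{ones-ratio} $a/k$ staying below $\log 2/\log 3 \approx 0.6309$ for some $k = \sigma(n)$. The heuristic expectation, supported by the known equidistribution of parity vectors modulo $2^{k}$, is that along a typical orbit the parities behave like independent fair coin flips, so that $a/k \to 1/2$ and the factor $3^{a}/2^{k}$ decays geometrically; this is exactly the content of the stochastic models alluded to in the abstract.

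The obstruction, and the reason the conjecture remains open, is that equidistribution is a statement about \emph{densities} of residue classes and says nothing about any single integer. Individual orbits with an anomalously high ones-ratio --- enough to grow, or even to close up into a nontrivial cycle --- are not excluded by these congruential facts alone. My strategy is therefore to supply the missing arithmetic input as a lower bound on the initial term of any finite iteration sequence of prescribed length $k$ and prescribed number of odd steps $a$, quantified through the binary entropy of $a/k$: the assertion is that sustaining a large ones-ratio forces the starting value $n$ to be very large, so that small integers cannot maintain the growth regime long enough to escape. Combined with the descent reduction above, such a bound would force $\sigma(n) < \infty$ for all $n$, while a companion Diophantine analysis of the cycle relation $n\,(2^{k}-3^{a}) = \text{(positive integer)}$ --- using sharp rational approximations to $\log_{2} 3$, in the spirit of linear forms in logarithms --- would eliminate nontrivial cycles.

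The main obstacle I expect is precisely the unconditional proof of this entropy lower bound. It is consistent with all computation and with both stochastic models, yet I do not see how to derive it from the equidistribution results alone, which are insensitive to the \emph{size} of the integers realizing a given parity pattern. For this reason I would introduce it as a hypothesis and develop its consequences --- the sharp bounds on total stopping time and on the maximum excursion --- rather than claim a complete unconditional proof; closing that final gap appears to be equivalent to resolving the $3x+1$ problem itself.
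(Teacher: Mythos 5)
You were asked to prove what is in fact the open $3x+1$ conjecture (Conjecture~\ref{conj:T}); the paper does not prove it either, and your proposal is honest on exactly this point. The hypothesis you propose to introduce --- a lower bound on the first term of any iteration sequence of prescribed length and prescribed number of odd steps, quantified by the binary entropy of the ones-ratio --- is precisely the paper's Lower Bound Hypothesis (Hypothesis~\ref{hyp:LBH2}), and the paper's actual contribution on this statement is only the conditional implication that LBH forces every trajectory into the trivial cycle (Lemma~\ref{lem:syracuse}). So at the level of the overall program your proposal coincides with the paper's; the difference lies in how the conditional implication is organized. You reduce, by strong induction, to finiteness of the stopping time $\sigma(n)$, and then propose a separate Diophantine elimination of nontrivial cycles via rational approximations to $\log_2 3$. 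The paper's Lemma~\ref{lem:syracuse} is more economical and needs neither ingredient: if the trajectory of $n$ never reaches $1$, one may take $n$ to be the smallest term of its own trajectory; the ratio formula of Lemma~\ref{lem:ratio} (after Eliahou) then gives $n \leq T^{(j)}(n) \leq 2^{-j}\left(3+3^{-1}\right)^{q} n$ for every $j$, forcing the ones-ratio to satisfy $q/j \geq \log 2/\log(10/3) = 0.575\ldots > 1/2$, hence $H(q/j) \leq 0.983\ldots < 1$, so the LBH lower bound $j^{-C} 2^{(1-H(q/j))j}$ tends to infinity while $n$ stays fixed --- a contradiction that covers divergent orbits and nontrivial cycles in one stroke. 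Note also that within your own framework the separate cycle analysis is redundant: the minimal element of a nontrivial cycle has $\sigma(n)=\infty$, so once your hypothesis forces $\sigma(n)<\infty$ for all $n>1$, cycles are already excluded and no linear forms in logarithms are needed. A small quantitative remark: your descent threshold $\log 2/\log 3 \approx 0.6309$ is the right heuristic for the drift, but once the $+1$ corrections are controlled (using that all odd terms are at least $3$ after the minimal-element normalization), the effective threshold becomes the constant $r_0 = \log 2/\log(10/3)$ appearing in the paper's proof.
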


The $3x+1$ problem may be divided into Conjectures \ref{conj:DT} and \ref{conj:NC} below, asserting the absence of any other dynamic than the trivial cycle.

\begin{conjecture}{\em (Absence of divergent trajectory)}\label{conj:DT}
For all positive integer $n$, the infinite sequence
 $\left(   T^{(k)}(n) \right)  _{k=0}^{\infty}$, called the trajectory of $n$, is bounded. 

\end{conjecture}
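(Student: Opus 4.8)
Because Conjecture~\ref{conj:DT} is a reformulation of part of the unsolved $3x+1$ problem, no genuine proof is available; what follows is the strategy one is naturally led to, together with the point at which it stalls. The plan is to control the size of the iterates by passing to the subsequence $\tilde n_0,\tilde n_1,\dots$ of \emph{odd} iterates, which obeys $\tilde n_{i+1}=(3\tilde n_i+1)/2^{b_i}$ with halving exponents $b_i\ge 1$. Taking logarithms and summing gives
\begin{equation*}
\log\tilde n_k \;\approx\; \log\tilde n_0 + k\log 3 - \Big(\sum_{i<k}b_i\Big)\log 2,
\end{equation*}
where the suppressed terms $\sum_i\log\!\big(1+(3\tilde n_i)^{-1}\big)$ are lower order exactly when the iterates are large, that is, in the only regime relevant to divergence. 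Since the first $m=\sum_{i<k}b_i$ iterates of $T$ contain exactly $k$ odd terms, the \emph{ones-ratio} $a_m/m$ equals $k/m$, and the identity shows that a trajectory can be unbounded only if its ones-ratio exceeds the critical value $\gamma=\log 2/\log 3\approx 0.6309$ over arbitrarily long initial segments. First I would record this reduction: boundedness of the trajectory of $n$ follows once one knows $\limsup_m a_m/m\le\gamma$, where $a_m$ counts the odd iterates among $n_0,\dots,n_{m-1}$.

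The next step is to bring in the combinatorial structure alluded to in the abstract. The length-$m$ parity vector $(n_0\bmod 2,\dots,n_{m-1}\bmod 2)$ depends only on $n\bmod 2^m$, and $n\mapsto$(parity vector) induces a bijection from $\mathbb{Z}/2^m\mathbb{Z}$ onto $\{0,1\}^m$. Hence, as $n$ ranges over one period $\{1,\dots,2^m\}$, every length-$m$ pattern occurs exactly once, and the number of patterns with ones-ratio $\rho$ is $\binom{m}{\rho m}\approx 2^{m\,H(\rho)}$, where $H$ denotes the binary entropy function. Modelling the successive parities as independent fair coin flips, the strong law of large numbers yields $a_m/m\to\tfrac12<\gamma$ almost surely; this is the heuristic predicting that every trajectory is not merely bounded but in fact descends to the trivial cycle.

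The hard part---and the reason the statement must remain a conjecture---is that for a \emph{fixed} $n$ the parity sequence is deterministic, so the strong law cannot be invoked: one would have to exclude the existence of even a single integer whose infinite parity sequence keeps its ones-ratio above $\gamma$ for all large $m$. The bijection above governs the statistics only \emph{across} the residues modulo a fixed $2^m$; it gives no control over one trajectory as $m\to\infty$, because the residue of $n$ modulo $2^m$ varies with $m$ in an uncontrolled way. It is precisely to bypass this obstruction that the present paper introduces a quantitative lower bound, expressed through the binary entropy $H$ of the ones-ratio, on the smallest integer admitting a prescribed finite iteration pattern. Rather than attacking Conjecture~\ref{conj:DT} head-on, the realistic route is conditional: granting such an entropy lower bound, one would show that any candidate divergent trajectory is forced to have ones-ratio below $\gamma$, contradicting the reduction of the first paragraph and thereby yielding boundedness.
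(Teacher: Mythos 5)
The statement you were asked about is one of the paper's \emph{conjectures}, not a theorem: the paper never proves Conjecture~\ref{conj:DT} unconditionally, and you are right that no genuine proof is available, so declining to offer one is the correct assessment. Your conditional strategy is essentially the paper's own route --- Lemma~\ref{lem:syracuse} uses Lemma~\ref{lem:ratio} to show that a trajectory avoiding $1$ must keep its ones-ratio bounded above $1/2$ (the paper obtains the rigorous threshold $r_{0}=\log 2/\log(3+3^{-1})=0.575\ldots$ by exploiting minimality of $n$ to control the $+1$ corrections, rather than your heuristic $\log 2/\log 3$), whence the LBH lower bound $j^{-C}2^{(1-H(r))j}$ diverges for fixed $n$, giving the desired contradiction.
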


\begin{conjecture}{\em (Absence of non-trivial cycle)}\label{conj:NC}
There exist no integers $n > 2$ and $j > 0$ such that $T^{(j)}(n) = n$.
\end{conjecture}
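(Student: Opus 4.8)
The plan is to reduce the existence of a non-trivial cycle to a single Diophantine equation in the cycle data, and then to rule out its solutions by combining Diophantine approximation with the large computational verification of the conjecture. Suppose a cycle $a_0 \to a_1 \to \cdots \to a_{j-1} \to a_0$ of minimal period $j$ existed, with exactly $k$ of the $a_i$ odd. Listing the odd elements in cyclic order as $x_0, \ldots, x_{k-1}$ and writing $x_{i+1} = (3x_i + 1)/2^{m_i}$, where $m_i \geq 1$ is the number of $T$-steps joining consecutive odd elements (so that $\sum_i m_i = j$), I would unroll the cycle to obtain the loop equation
\[
 x_0\,(2^j - 3^k) \;=\; \sum_{i=0}^{k-1} 3^{\,k-1-i}\, 2^{M_i}, \qquad M_i := \sum_{\ell < i} m_\ell .
\]
Since the right-hand side is strictly positive, this forces $2^j > 3^k$, equivalently a ones-ratio $k/j < \log 2 / \log 3 \approx 0.6309$; the trivial cycle is recovered as the solution $(j,k)=(2,1)$, $x_0 = 1$.

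Next I would attack the equation from two sides. On the arithmetic side I would bound the denominator $2^j - 3^k$ from below: because $\log 2/\log 3$ is irrational with well-understood continued-fraction convergents, a linear-forms-in-logarithms estimate bounds $\lvert j\log 2 - k\log 3\rvert$ away from zero, so that for each fixed $k$ only a sparse set of $j$ makes $2^j$ close to $3^k$, and $2^j - 3^k$ cannot be too small relative to $3^k$. On the combinatorial side the right-hand side is a geometric-type sum that I would bound above and below in terms of $j$ and $k$ alone, thereby converting the loop equation into two-sided estimates for the least cyclic element $x_{\min}$ as an explicit function of $(j,k)$.

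I would then invoke the computational fact, recorded in \cite{Lag10}, that every positive integer up to some explicit bound $N$ (currently far beyond $2^{60}$) iterates to $1$; hence any non-trivial cycle must satisfy $x_{\min} > N$. Feeding this constraint into the estimates for $x_{\min}$ rules out every cycle assembled from few circuits (few maximal runs of consecutive equal steps): there the Diophantine lower bound on $2^j - 3^k$ keeps $x_{\min}$ too small to exceed $N$, producing a contradiction. This is the regime covered by the classical circuit results, and I expect it to go through cleanly within the present framework.

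The hard part will be the case of many circuits, where $k$ is large. As $k$ grows the linear-forms lower bound on $2^j - 3^k$ deteriorates, while the exponents $M_i$ in the sum may be distributed among exponentially many admissible parity patterns, so the two-sided estimates on $x_{\min}$ no longer conflict with $x_{\min} > N$ uniformly in $k$. Closing this gap is exactly where a genuinely new input is needed: a uniform lower bound on the least term of a finite iteration sequence, expressed through the binary entropy of its ones-ratio — the lower bound hypothesis formulated in this paper — would supply the missing growth of $x_{\min}$ with $k$ and force a contradiction for all remaining $(j,k)$. Without such a bound the argument stalls precisely here, which is why the statement must be recorded as a conjecture rather than proved outright.
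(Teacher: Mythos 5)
This statement is one of the paper's open \emph{conjectures}: the paper offers no proof of it, conditional results aside, so there is nothing it could be ``checked against'' as a completed argument. Your sketch is, accordingly, not a proof, and you say so yourself; that self-assessment is accurate. The components you assemble are individually sound: the loop equation $x_0(2^j-3^k)=\sum_{i=0}^{k-1}3^{k-1-i}2^{M_i}$ is the standard and correct unrolling of a hypothetical cycle, the positivity of the right-hand side does force $2^j>3^k$ and hence a ones-ratio below $\log 2/\log 3$, and the combination of linear forms in logarithms with the computational bound $x_{\min}>N$ is exactly how the classical ``circuit'' results (Steiner, Simons--de Weger) exclude cycles built from few circuits. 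The gap you name --- that the Baker-type lower bound on $2^j-3^k$ degrades as $k$ grows while the admissible exponent patterns proliferate, so the two-sided estimates on $x_{\min}$ stop conflicting --- is precisely the genuine obstruction, and it is why the statement remains a conjecture here and everywhere else.

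Where your account can be sharpened is in how the paper's own hypothesis would finish the job. You suggest feeding LBH into the cycle-equation framework to restore growth of $x_{\min}$ with $k$; the paper's conditional route (Lemma \ref{lem:syracuse}) is considerably more direct and bypasses the cycle equation, Diophantine approximation, and computation entirely. Taking $n$ to be the least element of a trajectory avoiding $1$, Eliahou's product formula (Lemma \ref{lem:ratio}) gives $2^j \leq (3+3^{-1})^{q}$ for every $j$, hence a ones-ratio $r \geq r_0=\log 2/\log(10/3)=0.575\ldots$ and $H(r)\leq H(r_0)<1$; then the LBH lower bound $n \geq j^{-C}2^{(1-H(r))j}$ diverges as $j\to\infty$ while $n$ stays fixed, a contradiction that kills non-trivial cycles and divergent trajectories in one stroke. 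So if your goal is a conditional proof from Hypothesis \ref{hyp:LBH2}, the minimal-element argument is the efficient path; the cycle-equation machinery you describe is only needed for the unconditional partial results.
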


We propose a heuristic approach, which is greatly inspired by a well-known paper of Lagarias \cite{Lag85}, and we mostly follow his notations and denominations throughout the paper.

Combinatorial properties of $T$ iterations are leading us to formulate a new hypothesis (see  \S\ref{subsec:hypothesis})  involving the binary entropy function. So far, this function rarely appears in the vast literature on the $3x+1$ problem with a few notable exceptions (e.g., \cite[p. 84]{Wir98}). It has been used by Lagarias \cite{Lag85} to estimate the density of integers whose stopping time is bounded by a given value, thus improving a previous result of Terras \cite{Ter76}. Tao made a similar calculation to give a heuristic estimation of the number of non-trivial cycles, concluding that very likely there is none \cite{Tao11}. Let us also mention the application by Sinai of the notion of entropy of a dynamical system within a statistical modelling of the $3x+1$ problem \cite{Kon10, Sin03}. Besides, the binary entropy function is widely used in the context of information theory to express the entropy of Bernoulli processes.

In \S\ref{sec:syracuse}, we will see that proving our hypothesis would be more than sufficient to solve the $3x+1$ problem. Unexpectedly, it would further imply accurate upper bounds for the total stopping time and maximum excursion, which constitute the main result of the present paper (cf. Theorem \ref{th:dyn}). A brief comparison will be carried out with the predictions of the random walk model \cite{Lag92}. Then, in \S\ref{sec:model}, we analyze a simple random model that is supporting our hypothesis.

Finally, we investigate a particular case of our hypothesis related to finite sequences of $T$ iterations with only one even term.

\section{A lower bound hypothesis}
\label{sec:hypothesis}

\subsection{Combinatorial and heuristic approach}
\label{subsec:heuristic}

The $T$ function exhibits remarkable combinatorial properties under iterations. Indeed, if we consider for each positive integer $n$ and length $j$ the \textit{parity vector}
\begin{equation}
V_{j}(n) = \left(  n, T(n), \ldots , T^{(j-1)}(n) \right)   \mod 2,
\end{equation}
then we have the following result proved independently by Terras \cite{Ter76} and Everett \cite{Eve77}:
\begin{theorem}{\em (Terras)} \label{th:terras}
Two positive integers $n$ and $m$ have same parity vector of length $j$ if and only if $n \equiv m \pmod{ 2^{j}}$.
\end{theorem}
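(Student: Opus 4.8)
The plan is to prove both implications simultaneously by induction on the length $j$, exploiting the recursive structure of the parity vector. The base case $j=1$ is immediate: $V_1(n)=(n \bmod 2)$, so $V_1(n)=V_1(m)$ holds exactly when $n$ and $m$ share the same parity, that is, when $n \equiv m \pmod 2$.

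For the inductive step, I would first record the decomposition
\[
V_{j+1}(n) = \bigl( n \bmod 2,\ V_j(T(n)) \bigr),
\]
which follows directly from the definition, since the last $j$ entries of $V_{j+1}(n)$ are the parities of $T(n), T^{(2)}(n), \ldots, T^{(j)}(n)$, i.e. precisely $V_j(T(n))$. Consequently $V_{j+1}(n)=V_{j+1}(m)$ if and only if $n \equiv m \pmod 2$ and $V_j(T(n))=V_j(T(m))$. By the induction hypothesis the latter is equivalent to $T(n) \equiv T(m) \pmod{2^j}$. Thus the whole theorem reduces to the arithmetic equivalence
\[
n \equiv m \pmod{2^{j+1}} \iff \bigl( n \equiv m \!\!\pmod 2 \ \text{ and }\ T(n) \equiv T(m) \!\!\pmod{2^j} \bigr).
\]

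To establish this equivalence I would split into the two parity cases allowed by the condition $n \equiv m \pmod 2$. When $n,m$ are both even, $T(n)-T(m)=(n-m)/2$, so $2^{j+1}\mid(n-m)$ is equivalent to $2^{j}\mid(n-m)/2$. When $n,m$ are both odd, $T(n)-T(m)=3(n-m)/2$, and here the crucial point is that $3$ is a unit modulo every power of two: since $\gcd(3,2^{j+1})=1$, the condition $2^{j}\mid 3(n-m)/2$ is again equivalent to $2^{j+1}\mid(n-m)$. Both cases therefore yield the desired biconditional, completing the induction.

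I expect the only delicate point to be the odd case above, where one must cancel the factor $3$ arising from the $3n+1$ branch; this is exactly where the coprimality of $3$ with the powers of two is essential, and it is the feature that makes the affine map $n \mapsto T(n)$ a bijection on $\mathbb{Z}/2^{j}\mathbb{Z}$. An alternative packaging of the same idea would be to show directly that $n \mapsto V_j(n)$ descends to a well-defined map $\mathbb{Z}/2^{j}\mathbb{Z} \to \{0,1\}^{j}$ and then invoke a cardinality argument, both sides having $2^{j}$ elements, so that injectivity and surjectivity become equivalent; but the inductive argument has the advantage of proving both directions at once without a separate counting step.
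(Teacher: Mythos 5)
Your proof is correct. Note that the paper itself gives no proof of this statement: it is quoted as a known result of Terras and Everett, with citations only, so there is nothing internal to compare against. Your induction is complete and sound: the decomposition $V_{j+1}(n) = \bigl(n \bmod 2,\ V_j(T(n))\bigr)$ is the right recursive structure, and the arithmetic equivalence at the heart of the step checks out in both parity cases --- in particular the odd case, where $T(n)-T(m) = 3(n-m)/2$ and the oddness of $3$ lets you cancel it modulo $2^j$, is handled exactly as it must be. This is essentially the classical argument (the invertibility of $n \mapsto T(n)$ on $\mathbb{Z}/2^{j}\mathbb{Z}$, branch by branch, is what drives both the original proofs), and your observation that one could instead prove well-definedness of $n \mapsto V_j(n)$ on residues modulo $2^j$ and finish by counting is also a valid alternative, at the cost of a separate surjectivity or injectivity step.
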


An immediate consequence is that every positive integer $n \leq 2^{j}$ is uniquely identified by its parity vector $V_{j}(n)$. Hereafter, let $I(j,q)$ denote the set of positive integers $n$ for which there are exactly $q$ occurrences of 1 in $V_{j}(n)$. Then, from $I(j,q)$, we extract the finite subset
\begin{equation}
I_{0}(j,q) = I(j,q) \cap \lbrace 1, 2, 3, \ldots, 2^{j} \rbrace.
\end{equation}
Conversely, it follows from Theorem \ref{th:terras} that $I(j,q)$ is the set of congruence classes modulo $2^j$ of $I_{0}(j,q)$  over the positive integers. It is easily seen that, for any fixed  $j$, the set $\left\lbrace  I_{0}(j,0), \ldots, I_{0}(j,j) \right\rbrace $ is a partition of $\lbrace 1, \ldots, 2^{j} \rbrace$ such that
\begin{equation}\label{eq:card}
\# I_{0}(j,q) = \binom{j}{q} \quad \text{for }q = 0, \ldots, j
\end{equation}
where $\#$ denotes the cardinality. As an example, we exhibit for $j=6$ the partition of $\lbrace 1, \ldots, 64 \rbrace$:
\begin{align*}
I_{0}(6,0) &= \lbrace 64 \rbrace, \\
I_{0}(6,1) &= \lbrace 16, 20, 21, 32, 40, 42 \rbrace, \\
I_{0}(6,2) &= \lbrace 4, 5, 6, 8, 10, 12, 13, 24, 26, 34, 35, 48, 49, 52, 53 \rbrace, \\
I_{0}(6,3) &= \lbrace 1, 2, 3, 11, 17, 22, 23, 25, 28, 29, 36, 37, 38, 44, 45, 46, 50, 51, 56, 58 \rbrace, \\
I_{0}(6,4) &= \lbrace 7, 9, 14, 15, 18, 19, 30, 33, 43, 54, 55, 57, 59, 60, 61 \rbrace, \\
I_{0}(6,5) &= \lbrace 27, 31, 39, 41, 47, 62 \rbrace, \\
I_{0}(6,6) &= \lbrace 63 \rbrace.
\end{align*}

Lagarias suggested in \cite{Lag85} that the $T$ function has some mixing properties under iteration, modulo powers of 2. One may further verify that the cumulative distribution function of $I_{0}(j,q)$ from 1 to $2^{j}$ appears fairly linear, for large $j$ and $q$ values. Therefore we may expect that the distribution of $I_{0}(j,q)$ over $[0, 2^{j}]$ tends to be uniform and, roughly, that 
\begin{equation}\label{eq:approx_min}
\min I_{0}(j,q) \approx \frac{2^j}{\binom{j}{q}}.
\end{equation}
Also, one may wonder whether a lower bound of the form
\begin{equation}\label{eq:min0}
\min I_{0}(j,q) \geq j^{-C} \frac{2^j}{\binom{j}{q}}.
\end{equation}
holds for some positive constant $C$.

\subsection{Hypothesis}
\label{subsec:hypothesis}

The previous heuristic approach leads us to formulate the hypothesis to which this paper is dedicated:

\begin{hypothesis} \label{hyp:LBH1} 
For each $j \geq 1$ and $0 \leq q \leq j$, let $I_{0}(j,q)$ be the set of integers $n$ with $1 \leq n \leq 2^j$ such that the vector
$$ \left(  n, T(n), T(T(n)), \ldots, T^{(j-1)}(n) \right) $$
contains exactly $q$ odd integers. Then there is a real constant $C \geq 0$ such that, for all $j \geq 2$ and all $0 \leq q \leq j$, the set $I_{0}(j,q)$ is bounded from below by
\begin{equation}\label{eq:min1}
 j^{-C} \cdot 2^{\left( 1-H(r) \right) j}
\end{equation}
where $r=q/j$ is called the ``ones-ratio" and $H$ is the binary entropy function $H(x) = -x \log_{2}(x) - (1-x) \log_{2}(1-x)$.
\end{hypothesis}

In the literature on the $3x+1$ problem, the ``ones-ratio" usually denotes the proportion of odd terms in a sequence leading to the value 1. Hereafter, we extend this notion to all finite sequences of iterations, whatever the value of the last term.

The introduction of the binary entropy function $H$ in Hypothesis \ref{hyp:LBH1} is due to the first order approximation 
\begin{equation}\label{eq:binom_H}
 \log_2 \binom jq \sim  H\left(\frac qj \right) j
\end{equation}
for large values of $j$ and $q$, which can be derived from the Stirling formula. Let us recall that $H$ is a concave function with a single maximum $H(1/2) = 1$ and two minima $H(0) = H(1) = 0$ by continuous extension.

The value $H(r)$ in \eqref{eq:min1} is a measure of the entropy in the set of parity vectors $V_j(n)$ for $n \in I_{0}(j,q)$.

Here we provide another formulation of Hypothesis \ref{hyp:LBH1} with slightly less restrictive conditions, as it may be applied to the infinite sets $I(j,q)$ already introduced in $\S\ref{subsec:heuristic}$, and includes the case $j=1$.

\begin{hypothesis} \label{hyp:LBH2} $\textbf{\emph{(Lower Bound Hypothesis - LBH)}}$
There is a real constant $C \geq 0$ such that for all positive integers $j$ and $n$ not both equal to 1, we have
\begin{equation} \label{eq:min} 
 n \geq j^{-C} \cdot 2^{\left( 1-H\left( \frac{q}{j}\right)  \right) j}
\end{equation}
where $q$ is the number of odd integers in the vector $\left(  n, T(n), \ldots, T^{(j-1)}(n) \right)$.
\end{hypothesis}

 It is easy to see that Hypotheses \ref{hyp:LBH1} and \ref{hyp:LBH2} are  equivalent with the same constant $C$. For convenience purposes, we shall simply refer to both of them by LBH. Nonetheless, the formulation from Hypothesis \ref{hyp:LBH2} will prove to be more suitable when studying all the implications related to the $3x+1$ problem.

One may first verify that LBH holds and is quite sharp in many cases, in the sense that there is an integer $n \in I_{0}(j,q)$ for which the lower bound is reached with a small value of $C$. For example, it is sharp with $C$ near zero for the two extremal values of the ones-ratio $r=0$ and $r=1$, since $I_{0}(j,0)=\{ 2^j \}$  and $I_{0}(j,j) = \{ 2^j - 1 \}$ for all positive integer $j$. Remarkably, it is also sharp with $C=0$ in the central case $r=1/2$ arising when $j=2q$. Indeed the set $I_{0}(2q,q)$ contains the integer 1 which has parity vector $V_{j}(1) = (  1, 0, 1, 0, \ldots )$.

In fact, LBH is true for any $C \geq 0$ in all cases where $r=q/j \leq 1/2$. This is a consequence of Lemma \ref{lem:hyp_ok0} below together with the inequality
\begin{equation} 
H(x) \geq 2x, \quad \text{for }x \leq \frac 12
\end{equation}
which follows from the concavity of the $H$ function.

\begin{lemma} \label{lem:hyp_ok0}
For all $0 \leq 2q \leq j$,
$ \min I_{0}(j,q) = 2^{j - 2q}.$
\end{lemma}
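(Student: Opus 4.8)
The plan is to establish the equality by proving the two matching inequalities $\min I_{0}(j,q) \le 2^{j-2q}$ and $\min I_{0}(j,q) \ge 2^{j-2q}$ separately. For the first, I would simply exhibit $2^{j-2q}$ as a member of $I_{0}(j,q)$; for the second, I would show by induction on $j$ that \emph{every} positive integer $n$ whose parity vector of length $j$ has $q \le j/2$ ones satisfies $n \ge 2^{j-2q}$.

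To see that $2^{j-2q} \in I_{0}(j,q)$, set $m = j-2q$ and follow the trajectory of $n = 2^{m}$. Dividing by $2$ repeatedly gives the even terms $2^{m}, 2^{m-1}, \dots, 2^{1}$ at positions $0, \dots, m-1$, after which the trajectory enters the trivial cycle $1, 2, 1, 2, \dots$. Hence the parity vector consists of $m$ initial zeros followed by the alternating pattern $1,0,1,0,\dots$ of length $j-m = 2q$, which contains exactly $q$ ones. Since $0 \le m \le j$ also guarantees $1 \le 2^{j-2q} \le 2^{j}$, the integer $2^{j-2q}$ lies in $I_{0}(j,q)$, so $\min I_{0}(j,q) \le 2^{j-2q}$.

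For the lower bound I would induct on $j$, the claim being: if the length-$j$ parity vector of $n$ has $q$ ones with $2q \le j$, then $n \ge 2^{j-2q}$. The base case $j=1$ forces $q=0$, hence $n$ even and $n \ge 2$. In the inductive step the case $2q=j$ is trivial (the bound is $1$), so assume $2q \le j-1$ and split on the parity of $n$. If $n$ is even, then $T(n) = n/2$ has a length-$(j-1)$ parity vector with the same $q$ ones, and the induction hypothesis (applicable since $2q \le j-1$) yields $n = 2T(n) \ge 2^{j-2q}$. If $n$ is odd, then $q \ge 1$ and $T(n) = (3n+1)/2$ has $q-1$ ones in length $j-1$; the hypothesis gives $T(n) \ge 2^{j-2q+1}$, whence $n \ge (2^{j-2q+2}-1)/3$.

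The main obstacle is the odd case, where the multiplication by $3$ obstructs an exact doubling argument; here one must check the arithmetic inequality $(2^{m+2}-1)/3 \ge 2^{m}$ for $m = j-2q \ge 1$, which reduces to $2^{m} \ge 1$ and closes the induction. One should also verify throughout that the reduced parameters $(j-1,q)$ and $(j-1,q-1)$ still satisfy the constraint $2q' \le j'$ required to invoke the hypothesis, and note that $q=0$ automatically forces $n$ even, so the odd case genuinely has $q \ge 1$. Combining the two inequalities gives $\min I_{0}(j,q) = 2^{j-2q}$.
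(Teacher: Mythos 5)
Your proposal is correct and, in substance, it is the same proof as the paper's: both halves coincide, with the membership $2^{j-2q} \in I_{0}(j,q)$ established by the identical witness trajectory ($j-2q$ halvings followed by the cycle $1,2,1,2,\ldots$). The only difference is presentational: the paper gets the lower bound in one stroke from $1 \leq T^{(j)}(n) \leq 2^{2q-j} n$ (iterating $T(m) \leq 2m$), whereas you unroll that same per-step doubling/halving bound as an induction on $j$ with a parity case split, which is longer but equally valid.
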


\begin{proof}
For all $n \in I_{0}(j,q)$, we write 
\[ 1 \leq T^{(j)}(n) \leq 2^{2q - j} n\]
where the second inequality is easily obtained from the fact that $T(m) \leq 2m$ for any  integer $m$.
It follows that
\[ \min I_{0}(j,q) \geq 2^{j - 2q}.\]
To complete the proof, observe that $2^{j - 2q} \in I_{0}(j,q)$.
\end{proof}

As a result of Lemma \ref{lem:hyp_ok0} along with the strict concavity of the $H$ function, it turns out that the lower bound in LBH is not sharp when the ones-ratio is strictly between 0 and 1/2. However, the numerical results in \S\ref{subsec:comp} will show that it can be sharp for sequences that tend to grow.

The forthcoming Theorem \ref{th:hyp_ok} states the validity of the inequality \eqref{eq:min} for a large part of the sequences with a ones-ratio between  1/2 and $r_H = 0.609\ldots$\footnote{The value of $r_H$ already appears in various papers of Lagarias and coauthors (e.g., \cite[p. 140]{Kon10}) as the ones-ratio upper limit for finite sequences leading to 1 in stochastic models of the $3x+1$ problem.}. It relies on Lemma \ref{lem:ratio}, which is a generalization of a formula by Eliahou \cite{Eli93} regarding all cycles of the $T$ function.

\begin{lemma}  \label{lem:ratio}
Let $1 \leq q \leq j$ and $n \in I(j,q)$. Then
\begin{equation} \label{eq:ratio}
\frac{T^{(j)}(n)}{n} = 2^{-j}\prod_{k=0}^{q-1} \left( 3 + m_{k}^{-1} \right)
\end{equation}
where $m_{0}, \ldots, m_{q-1}$ are the odd terms in the finite sequence $\left(   T^{(k)}(n) \right)  _{k=0}^{j-1}$.
\end{lemma}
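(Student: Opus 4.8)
The plan is to track the iteration step by step, separating the odd steps (where $T(m) = \frac{3m+1}{2}$) from the even steps (where $T(m) = \frac{m}{2}$), and to observe that each step contributes a multiplicative factor to the ratio $T^{(k+1)}(n)/T^{(k)}(n)$. First I would write $x_k = T^{(k)}(n)$ for $k = 0, \ldots, j$, so that $x_0 = n$ and the product we want to evaluate telescopes as
\begin{equation*}
\frac{T^{(j)}(n)}{n} = \frac{x_j}{x_0} = \prod_{k=0}^{j-1} \frac{x_{k+1}}{x_k}.
\end{equation*}

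Next I would compute each factor according to the parity of $x_k$. When $x_k$ is even, $x_{k+1} = x_k/2$, so the factor is exactly $\frac{1}{2}$. When $x_k$ is odd — say $x_k = m_\ell$ is the $\ell$-th odd term — we have $x_{k+1} = \frac{3m_\ell + 1}{2}$, and hence the factor is
\begin{equation*}
\frac{x_{k+1}}{x_k} = \frac{3m_\ell + 1}{2 m_\ell} = \frac{1}{2}\left( 3 + m_\ell^{-1} \right).
\end{equation*}
Thus every one of the $j$ steps contributes a factor of $\frac{1}{2}$, while the $q$ odd steps each contribute an additional factor of $\left( 3 + m_\ell^{-1} \right)$.

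Collecting these factors gives the claimed identity: the $j$ factors of $\frac{1}{2}$ combine to $2^{-j}$, and the contributions from the odd terms combine to $\prod_{k=0}^{q-1}\left( 3 + m_k^{-1} \right)$, yielding
\begin{equation*}
\frac{T^{(j)}(n)}{n} = 2^{-j} \prod_{k=0}^{q-1} \left( 3 + m_k^{-1} \right).
\end{equation*}
The indexing is automatically consistent because $n \in I(j,q)$ means exactly $q$ of the terms $x_0, \ldots, x_{j-1}$ are odd, so there are precisely $q$ odd factors to account for. Since the formula is essentially a direct bookkeeping of the telescoping product, there is no genuine obstacle here; the only point requiring minor care is ensuring that the factors $\frac{1}{2}$ from the odd steps are not double-counted — each odd step contributes both a $\frac{1}{2}$ (absorbed into the global $2^{-j}$) and a factor $\left( 3 + m_\ell^{-1}\right)$, which is exactly what the rewriting $\frac{3m_\ell+1}{2m_\ell} = \frac{1}{2}(3 + m_\ell^{-1})$ makes transparent.
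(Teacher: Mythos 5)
Your proof is correct and follows essentially the same route as the paper: a telescoping product $\prod_{k=0}^{j-1} T^{(k+1)}(n)/T^{(k)}(n)$ with each even step contributing $\tfrac12$ and each odd step contributing $\tfrac12\left(3+m_\ell^{-1}\right)$. The paper merely groups the factors as $2^{-(j-q)}\prod_{k=0}^{q-1}\tfrac{3+m_k^{-1}}{2}$ before simplifying, which is the same bookkeeping you carried out.
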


\begin{proof}
This result is straightforward to prove by applying the same method as in \cite{Eli93}. Indeed, we have 

\[\frac{T^{(j)}(n)}{n} = \prod_{k=0}^{j-1} \frac{T^{(k+1)}(n)}{T^{(k)}(n)} = \frac{1}{2^{j-q}} \prod_{k=0}^{q-1} \left( \frac{3 + m_{k}^{-1} }{2} \right)\]
since $j-q$ is the number of even terms among $n, T(n), \ldots, T^{(j-1)}(n)$.
\end{proof}

\begin{theorem}
\label{th:hyp_ok}
Let $1 \leq q < j$ such that $r = q/j \leq \rho^{-1} = 0.630\ldots$, where $\rho = \log_{2}3$, and let $n \in I(j,q)$ for which the terms $n, T(n), \ldots, T^{(j)}(n)$ are all distinct. Then
\begin{equation} \label{eq:sharp}
n \geq j^{-\frac 16} \cdot 2^{(1-\rho r)j}.
\end{equation}
Assume further that $r \leq r_H = 0.609089767\ldots$, where $r_{H}$ is the unique non-zero real number such that
\begin{equation} \label{eq:rH}
H(r_{H}) \log 2 = r_{H} \log 3.
\end{equation}
Then there holds the lower bound
\begin{equation}
 n \geq j^{-\frac 16} \cdot 2^{\left( 1-H(r) \right) j}.
\end{equation}
\end{theorem}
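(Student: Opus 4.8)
The plan is to deduce the linear bound \eqref{eq:sharp} from the product formula \eqref{eq:ratio}, and then to obtain the entropy bound from it by a convexity comparison. Since $T^{(j)}(n)$ is a positive integer, Lemma \ref{lem:ratio} gives $n \ge 2^{j}/\prod_{k=0}^{q-1}(3+m_k^{-1})$. Writing $3+m_k^{-1} = 3\bigl(1+\tfrac{1}{3m_k}\bigr)$ and using $3^{q} = 2^{\rho q} = 2^{\rho r j}$, the bound \eqref{eq:sharp} will follow from the purely arithmetic inequality
\[ \prod_{k=0}^{q-1}\Bigl(1+\tfrac{1}{3m_k}\Bigr) \le j^{1/6}. \]
Taking logarithms and using $\ln(1+x)\le x$, it suffices to prove $\sum_{k=0}^{q-1} m_k^{-1} \le \tfrac12 \ln j$. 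The exponent $\tfrac16$ is exactly $\tfrac13\cdot\tfrac12$: the $\tfrac13$ comes from the factor $3$ in $3+m_k^{-1}$, and the $\tfrac12$ from the asymptotic density of the odd integers.

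The arithmetic input is the hypothesis that $n, T(n), \ldots, T^{(j)}(n)$ are distinct, which forces the odd terms $m_0,\ldots,m_{q-1}$ to be $q$ \emph{distinct} odd positive integers. Ordering them as $m_{(0)} < \cdots < m_{(q-1)}$ gives $m_{(i)} \ge 2i+1$, whence
\[ \sum_{k=0}^{q-1} m_k^{-1} \le \sum_{i=1}^{q} \frac{1}{2i-1} = \tfrac12 \ln q + O(1), \]
the constant tending to $\ln 2 + \gamma/2$. Because $q < j$, the leading term is already at most $\tfrac12\ln j$, so the entire difficulty lies in the additive $O(1)$: for $r$ close to $\rho^{-1}$, where $j$ is only a bounded multiple of $q$, replacing $\ln q$ by $\ln j$ does not absorb this constant. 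Indeed the extremal configuration $\{m_k\} = \{1,3,\ldots,2q-1\}$ would violate \eqref{eq:sharp}, so it must be ruled out.

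This is the crux, and it is where the distinctness of the \emph{whole} trajectory, not merely of its odd terms, must be used. The point is that small odd values cannot cluster: the forward image of a small odd $m$ is $(3m+1)/2$, again small and frequently odd, so realising $\{1,3,\ldots,2q-1\}$ as the set of odd terms would force either additional odd terms, or growth, or an immediate repetition (e.g. $1 \mapsto 2 \mapsto 1$), each incompatible with a short trajectory of distinct integers. I would try to make this quantitative by bounding, for each dyadic scale $2^{s}$, the number of odd terms below $2^{s}$ in terms of $s$ and the number of steps spent near that scale, thereby upgrading $m_{(i)} \ge 2i+1$ to an estimate strong enough to replace $\tfrac12\ln q + O(1)$ by $\tfrac12\ln j$ throughout $r \le \rho^{-1}$. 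Controlling this clustering rigorously is the main obstacle.

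For the entropy bound, set $g(r) = H(r) - \rho r$. Equation \eqref{eq:rH} gives $H(r_H) = \rho r_H$ (recall $\rho = \log_2 3$), so $g(r_H)=0$; also $g(0)=0$, and $g$ is concave since $H$ is. A concave function vanishing at two points is nonnegative between them, so $H(r) \ge \rho r$ on $[0,r_H]$. Hence $1-H(r) \le 1-\rho r$ and $2^{(1-H(r))j} \le 2^{(1-\rho r)j}$, and the second inequality of the theorem follows at once from \eqref{eq:sharp}. This also explains why the threshold drops from $\rho^{-1}$ to $r_H$: beyond $r_H$ the inequality $H(r) \ge \rho r$ reverses, and the linear bound no longer dominates the entropy bound.
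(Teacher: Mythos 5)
Your reduction of \eqref{eq:sharp} to $\sum_{k=0}^{q-1} m_k^{-1} \le \tfrac12 \log j$ and your concavity argument for the second bound both coincide with the paper's proof, but your treatment of the first bound stops exactly where the theorem's content lies: you concede that the extremal configuration $\{1,3,\ldots,2q-1\}$ defeats your estimate and propose an unexecuted dyadic anti-clustering argument to exclude it, calling this ``the main obstacle''. This is a genuine gap, and the paper closes it with an observation far simpler than what you envisage: distinctness of the \emph{full} window $n, T(n), \ldots, T^{(j)}(n)$ forces every odd term $m_k$ to be strictly greater than $1$. Indeed, if $T^{(i)}(n)=1$ with $i\le j-2$, then $T^{(i+2)}(n)=1$ repeats a term; and if $T^{(j-1)}(n)=1$, then since the only $T$-preimage of $1$ is $2$ (note $j\ge2$ because $1\le q<j$), either $j=2$ and $n=2=T^{(2)}(n)$, or $j\ge3$ and $T^{(j-2)}(n)=2=T^{(j)}(n)$; in every case a repetition occurs. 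Hence the $m_k$ are distinct odd integers $\ge 3$, so
\[
\sum_{k=0}^{q-1}\frac1{m_k}\;\le\;\sum_{k=1}^{q}\frac1{2k+1}\;\le\;\frac12\log q+\log 2+\frac{\gamma}{2}-1+\frac1{2q},
\]
where $\gamma$ is Euler's constant. Dropping the single term $1$ from your harmonic sum changes the additive constant from $\log 2+\gamma/2\approx +0.98$ to $\log 2+\gamma/2-1\approx -0.018$, and that sign change is decisive.

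The second ingredient you never use in your first part is the hypothesis $r\le\rho^{-1}$ itself. It gives $\tfrac12\log j=\tfrac12\log q-\tfrac12\log r\ge\tfrac12\log q+\tfrac12\log\rho$, and $\tfrac12\log\rho\approx 0.23$ absorbs the (now negative) constant together with the $\tfrac1{2q}$ term as soon as $q\ge3$; the cases $q=1,2$ are checked by hand (e.g.\ $q=1$ forces $j\ge2$, and $\tfrac13<\tfrac12\log 2$). This yields $\sum_{k} m_k^{-1}\le\tfrac12\log j$ and hence \eqref{eq:sharp}, with no clustering analysis at any scale: the only arithmetic fact needed beyond distinctness of the odd terms is that the value $1$ cannot occur among them.
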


\begin{proof}
From Lemma \ref{lem:ratio}, we write
\[ \frac 1n \leq \frac{T^{(j)}(n)}{n} = 2^{-j}\prod_{k=0}^{q-1} \left( 3 + m_{k}^{-1} \right)\]
where $m_{0}, \ldots, m_{q-1}$ are the odd terms among $n, T(n), \ldots, T^{(j-1)}(n)$. This gives
\begin{equation} \label{eq:lower_bound_logn}
 \log n \geq j \log2 - q \log3 - \sum_{k=0}^{q-1} \log \left(  1 + \frac{1}{3 m_{k}}\right)
\end{equation}
with 
\begin{equation}
\sum_{k=0}^{q-1} \log \left(  1 + \frac{1}{3 m_{k}}\right) \leq \frac 13 \sum_{k=0}^{q-1}  \frac{1}{m_{k}} 
\end{equation}
by applying $\log(1+x) \leq x$. Since $m_{0}, \ldots, m_{q-1}$ are distinct odd numbers strictly greater than 1, we get
\begin{equation} \sum_{k=0}^{q-1}  \frac{1}{m_{k}} \leq \sum_{k=1}^{q}  \frac{1}{2k+1}.
\end{equation}
Then, from the upper bound 
\[\sum_{k=1}^{q}  \frac{1}{2k+1} \leq \frac 12 \log q + \log 2 + \frac 12 \gamma_{euler} -1 + \frac{1}{2q}\]
where $\log 2 + \frac 12 \gamma_{euler} = 0.981\ldots$, we infer
\begin{equation} \label{eq:harmonic}
 \sum_{k=1}^{q}  \frac{1}{2k+1} \leq \frac 12 \log q - \frac 12 \log r = \frac 12 \log j
\end{equation}
for $q \geq 3$, using the fact that $r \leq \rho^{-1}$. It is easy to state inequality \eqref{eq:harmonic} for $q=1,2$.  E.g., for $q=1$, we check that
\[ \frac 13 < \frac 12 \log 2 \leq \frac 12 \log j .\]
It follows from the inequalities \eqref{eq:lower_bound_logn} to \eqref{eq:harmonic} that
\[ \log n \geq j \log2 - q \log3 -\frac 16 \log j,\]
or equivalently,
\[ n \geq j^{-\frac 16} 2^{(1-\rho r)j}.\]
To conclude the proof, observe that the $H$ function is strictly concave. This implies that there is a unique non-zero real $r_H$ for which $H(r_H) = \rho r_H$, and that $H(x) \geq \rho x$ if and only if $x \leq r_H$. As a result, if we further assume that $r \leq r_H$, it yields $\rho r \leq H(r)$ and
\[ n \geq j^{-\frac 16} 2^{\left( 1-H(r)\right) j} \]
as claimed.
\end{proof}

The condition that all terms of the sequence are distinct in Theorem \ref{th:hyp_ok} is mandatory to ensure that no cycle appears. Otherwise, the inequality \eqref{eq:sharp} would be easily falsified, by considering $n=4 \in I(6,2)$ or $n=27 \in I(74,43)$ for instance.

\subsection{Numerical results}
\label{subsec:comp}

We computed the trajectory of all integers $n \leq 10^{9}$. According to the calculations, LBH holds with $C=0$ for $n \leq 10^{9}$ in all cases where $j \neq q$, except for a few integers $n$ given in Table \ref{tab:comp1}. The value $c(n)$ in Table \ref{tab:comp1} denotes the smallest non-negative real such that the lower bound \eqref{eq:min} from LBH holds for $n$ provided $C \geq c(n)$, whatever the number $j$ of iterations. Note that for $n=1$, the case $j=1$ is excluded.

\begin{xrem} One may verify that $c(n)$ exists whenever the trajectory of $n$ contains the value 1. This follows from the fact that $1/2$ is a critical point for the $H$ function.
\end{xrem}

\begin{table}
\begin{center}
\begin{tabular}{|rrrcc|}
  \hline
  n & j & q & q/j & c(n) \\
  \hline
  1 & 3 & 2 & 0.666 & 0.154 \\
  27 & 45 & 33 & 0.733 & 0.472 \\
  31 & 42 & 31 & 0.738 & 0.408 \\
  41 & 44 & 32 & 0.727 & 0.265 \\
  47 & 41 & 30 & 0.731 & 0.195 \\
  54 & 46 & 33 & 0.717 & 0.132 \\
  55 & 46 & 33 & 0.717 & 0.127 \\
  62 & 43 & 31 & 0.720 & 0.058 \\
  63 & 43 & 31 & 0.720 & 0.053 \\
  73 & 48 & 34 & 0.708 & 0.001 \\
  159487 & 35 & 32 & 0.914 & 0.574 \\
  212649 & 37 & 33 & 0.891 & 0.195 \\
  239231 & 34 & 31 & 0.911 & 0.293 \\
  358847 & 33 & 30 & 0.909 & 0.008 \\
  5095423 & 29 & 28 & 0.965 & 0.091 \\
  19638399 & 199 & 140 & 0.703 & 0.034 \\
  21916159 & 40 & 37 & 0.925 & 0.045 \\
  319804831 & 91 & 77 & 0.849 & 0.980 \\
  379027947 & 96 & 80 & 0.833 & 0.774 \\
  426406441 & 93 & 78 & 0.838 & 0.773 \\
  479707247 & 90 & 76 & 0.844 & 0.776 \\
  568541921 & 95 & 79 & 0.831 & 0.575 \\
  598957743 & 103 & 84 & 0.815 & 0.418 \\
  639609663 & 92 & 77 & 0.836 & 0.571 \\
  719560871 & 89 & 75 & 0.842 & 0.571 \\
  758055895 & 97 & 80 & 0.824 & 0.386 \\
  852812883 & 94 & 78 & 0.829 & 0.375 \\
  898436615 & 102 & 83 & 0.813 & 0.226 \\
  959414495 & 91 & 76 & 0.835 & 0.368 \\
 \hline
\end{tabular}
\caption{Integers $n \leq 10^{9}$ such that $c(n) > 0$ and $j,q$ values from which $c(n)$ has been derived. All cases where $j=q$ are omitted.}
\label{tab:comp1}
\end{center}
\end{table}

We found three successive records 0.472, 0.574 and 0.980 for the values of $c(n)$, corresponding to the integers $n=27$, 159487 and 319804831, in that order. These numbers are already known as ``maximum excursion" record-holders for the $3x+1$ problem \cite{Oli10} (see also \S\ref{subsec:TST_ME}). Table \ref{tab:records} gives the other known record-holders for the maximum excursion that are leading to non-zero values of the $c$ function. 

As a result of these calculations, if we assume LBH, then 
\begin{equation}\label{eq:C_min}
C \geq 0.980916600\ldots.
\end{equation}

However, we previously omitted the case $j=q$, which occurs when $n \equiv -1 \pmod{ 2^{j}}$. For $j>1$, we get the positive lower bound
\begin{equation} \label{eq:C_min2}
 c(2^j -1) \geq \frac{-\log(1-2^{-j})}{\log j}
\end{equation}
by considering exactly $j$ iterations. The highest value of the lower bound \eqref{eq:C_min2} is $ \log(4/3)/\log(2) = 0.415\ldots$ obtained when $j=2$. We verified that the equality holds in \eqref{eq:C_min2} for all $j \leq 1000$ except the cases $j=5$ and $j=6$ for which it is necessary to operate more than $j$ iterations of $T$ (see $n=31$ and 63 in Table \ref{tab:comp1}).

One observes, mainly in Table \ref{tab:comp1}, that the integers $n$ for which $c(n) > 0$ tend to form ``clusters" of sequences with very similar lengths and ones-ratios. This is due to a well-known phenomenon of \textit{coalescence} of sequences. For example, the trajectories starting from 27 and 31 are almost identical since $T^{(3)}(27) = 31$.

\begin{table}
\begin{center}
\begin{tabular}{|rrrcc|}
  \hline
  n & j & q & q/j & c(n) \\
  \hline
  1410123943 & 197 & 144 & 0.730 & 0.145 \\
  272025660543 & 109 & 91 & 0.834 & 0.081 \\
  871673828443 & 107 & 91 & 0.850 & 0.327 \\
  3716509988199 & 201 & 155 & 0.771 & 0.426 \\
  9016346070511 & 202 & 155 & 0.767 & 0.113 \\
  1254251874774375 & 227 & 175 & 0.770 & 0.076 \\
  10709980568908647 & 298 & 222 & 0.744 & 0.077 \\
  1980976057694848447 & 399 & 292 & 0.731 & 0.408 \\
 \hline
\end{tabular}
\caption{Known maximum excursion record-holders $n > 10^{9}$ for which $c(n) > 0$.}
\label{tab:records}
\end{center}
\end{table}

\section{Back to the $3x+1$ problem}
\label{sec:syracuse}

Hypothesis \ref{hyp:LBH2} (LBH) asserts that all integers $n \in I(j,q)$ with $j \neq 2q$ are lower bounded by a quasi-exponential function of $j$ whose growth rate depends on the ones-ratio $q/j$. Moreover it implies that the ones-ratio of any given trajectory always converges to 1/2 as the number of iterations tends to $\infty$, thus maximizing the binary entropy function. Proving this property would be sufficient to solve the $3x+1$ problem, as shown by the next lemma.

\begin{lemma} \label{lem:syracuse}
LBH implies that the trajectory of any positive integer leads to the trivial cycle ($3x+1$ problem).
\end{lemma}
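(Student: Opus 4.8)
The plan is to split the $3x+1$ problem into its two constituent parts, Conjectures \ref{conj:DT} and \ref{conj:NC}, and deduce each from LBH. First I would record the elementary dichotomy: for fixed $n$ the trajectory $\left(T^{(k)}(n)\right)_{k\geq 0}$ is either bounded, in which case some value recurs and the trajectory is eventually periodic and hence enters a cycle, or else each value is attained only finitely often and $T^{(k)}(n) \to \infty$. Thus it suffices to rule out divergence and non-trivial cycles: a bounded trajectory then necessarily reaches the trivial cycle. The engine for both exclusions is a single consequence of LBH. Fixing $n$ and writing, for each $j \geq 1$, $q_j$ for the number of odd terms among $n, T(n), \ldots, T^{(j-1)}(n)$ and $r_j = q_j/j$, I apply \eqref{eq:min} with this $n$ and take base-$2$ logarithms to get $(1-H(r_j))\,j \leq \log_2 n + C\log_2 j$. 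Since the right-hand side is $O(\log j)$ while the left-hand side is nonnegative, I conclude $1-H(r_j) \to 0$; as $H$ is strictly concave with maximum $1$ attained only at $1/2$, this forces the ones-ratio to satisfy $r_j \to 1/2$ along the entire trajectory.

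For non-trivial cycles, suppose $T^{(p)}(n)=n$ with minimal period $p>0$ and $q$ odd terms per period, the cycle being non-trivial so that every odd term satisfies $m_k \geq 3$. Evaluating the estimate above along $j=mp$ gives the constant value $r_{mp}=q/p$, and the bound $(1-H(q/p))\,mp \leq \log_2 n + C\log_2(mp)$ then forces $H(q/p)=1$, that is $q/p=1/2$, since otherwise the left side grows linearly in $m$ while the right side grows only logarithmically. On the other hand, Lemma \ref{lem:ratio} applied with $T^{(p)}(n)=n$ yields the cyclic identity $2^p = \prod_{k=0}^{q-1}\left(3+m_k^{-1}\right)$, and from $3 < 3+m_k^{-1} \leq 10/3$ for odd $m_k \geq 3$ I obtain $\rho < p/q \leq \log_2(10/3)$, whence $q/p \in (0.576,\,0.631)$ and in particular $q/p \neq 1/2$. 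This contradiction excludes non-trivial cycles.

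For divergence, suppose $T^{(k)}(n) \to \infty$. Then the trajectory has infinitely many odd terms, and, being a subsequence of a sequence tending to infinity, these satisfy $m_k \to \infty$. Writing the identity of Lemma \ref{lem:ratio} in logarithmic form gives $\log_2 T^{(j)}(n) = \log_2 n + (\rho r_j - 1)\,j + S_j$ with $S_j = \sum_{k=0}^{q_j-1}\log_2\left(1+\tfrac{1}{3m_k}\right) \geq 0$. Because $m_k \to \infty$ the summands tend to $0$, so Cesàro averaging gives $S_j = o(q_j)$ and hence $S_j = o(j)$; meanwhile $r_j \to 1/2$ yields $\rho r_j - 1 \to \rho/2 - 1 < 0$, so $(\rho r_j - 1)\,j \to -\infty$ linearly. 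Therefore $\log_2 T^{(j)}(n) \to -\infty$, contradicting $T^{(j)}(n) \geq 1$. This rules out divergence and completes the deduction.

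The main obstacle is the divergence case. Here the crude estimate $S_j \leq q_j \log_2(4/3)$ is too weak, being of the same linear order as the favourable drift $(\rho/2-1)\,j$ and in fact cancelling it exactly along the trivial cycle (where $m_k=1$); the argument genuinely relies on the divergence hypothesis $m_k \to \infty$ to upgrade this to $S_j = o(j)$. By contrast the cycle case is clean once the ratio-convergence $r_j \to 1/2$ is combined with the Eliahou-type identity of Lemma \ref{lem:ratio}. One should also take care over the reduction step, verifying that a bounded $T$-trajectory is eventually periodic and hence enters a cycle.
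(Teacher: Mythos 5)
Your proof is correct, but it takes a genuinely different route from the paper's. The paper avoids your case split entirely with one trick: supposing the trajectory of $n$ never reaches $1$, it replaces $n$ by the smallest term of that trajectory (which exists by well-ordering), so that $3 \leq n \leq T^{(j)}(n)$ for every $j$. Lemma \ref{lem:ratio} then gives $n \leq T^{(j)}(n) \leq 2^{-j}(3+3^{-1})^{q_j}\, n$, since every odd term is $\geq n \geq 3$; hence $r_j = q_j/j \geq r_0 = \log 2/\log(10/3) = 0.575\ldots > 1/2$ for \emph{all} $j$, so $1-H(r_j) \geq 1-H(r_0) > 0$ and the right-hand side of \eqref{eq:min} tends to infinity while $n$ stays fixed --- a single contradiction covering cycles and divergent trajectories at once. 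Your decomposition instead mirrors Conjectures \ref{conj:DT} and \ref{conj:NC}: the cycle case via constancy of $r_{mp}=q/p$ plus the Eliahou-type interval, the divergence case via $m_k \to \infty$, Ces\`aro averaging, and the negative drift $\rho/2-1<0$. Both of your cases are sound, and your ``engine'' $r_j \to 1/2$ is exactly the informal claim the paper makes in the text just before the lemma, so your argument has the merit of making that dynamical content explicit. Your closing concern about the divergence case is also well placed: that is precisely where the paper's minimal-element trick pays off, replacing the analytic input ``odd terms tend to infinity'' by the arithmetic input ``odd terms are at least $3$'', which already pushes the ones-ratio strictly above $1/2$ and lets LBH blow up on its own. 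One cosmetic slip: $\log 2/\log(10/3) = 0.5757\ldots$, so your cycle-case interval should read $[0.575\ldots,\,0.631)$ rather than $(0.576,\,0.631)$; the conclusion $q/p \neq 1/2$ is unaffected.
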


\begin{proof}
Suppose that the trajectory of a given positive integer $n$ does not contain the value 1. We may assume, without loss of generality, that $n$ is the smallest term in the trajectory: $3 \leq n \leq T^{(j)}(n)$ for any $j$.

We obtain by Lemma \ref{lem:ratio}
\[n \leq T^{(j)}(n) \leq 2^{-j} (3 + 3^{-1})^{q} \cdot n\]
where $q$ is the number of odd terms among $n, T(n), \ldots, T^{(j-1)}(n)$.  This gives
\[(3 + 3^{-1})^{q} \geq 2^{j}.\]
Therefore the ones-ratio $r = q/j$ has lower bound
\[r \geq r_{0} = \frac{\log 2}{\log(3+3^{-1})} = 0.575 \ldots \]
and $H(r)$ is upper bounded by $H(r_{0}) = 0.983\ldots$. It follows that the right-hand side in \eqref{eq:min} is unbounded as $j$ tends to infinity,  yielding a contradiction with LBH. 
\end{proof}

\section{Dynamic behaviour}
\label{sec:dyn}

\subsection{Total stopping time and maximum excursion}
\label{subsec:TST_ME}

Since Crandall \cite{Cra78}, the dynamic of $T$ is often compared to a multiplicative random walk with a downward drift. Several stochastic models \cite{Bor01,Kon10,Lag92} have been proposed in order to explain the empirical observations concerning the \textit{total stopping time} $\sigma_{\infty}(n)$ and the \textit{maximum excursion} $t(n)$ of a trajectory starting from $n$. Recall that $\sigma_{\infty}(n)$ is the number of iterations until the first occurrence of 1, and $t(n)$ is the highest term of the trajectory. Hypothetically, we set $\sigma_{\infty}(n) = \infty$ if the trajectory of $n$ does not contain the value 1, and $t(n) = \infty$ if it is unbounded. The stochastic models predict that 

\begin{equation} \label{eq:limsup_sigma}
  \limsup_{n \to \infty} \frac{\sigma_{\infty}(n)}{\log n} = \gamma_{RW} \approx 41.677647
\end{equation}
and
\begin{equation} \label{eq:limsup_log_t}
 \limsup_{n \to \infty} \frac{\log t(n)}{\log n} = 2,
\end{equation}

which is consistent with all the empirical data provided independently by Oliveira e Silva and Roosendaal. As reported in \cite{Kon10}, the highest known value of the ratio $\sigma_{\infty}(n)/\log n$ is equal to $36.716\ldots$, due to the finding of a new  record-holder $n \approx 7.21 \times 10^{21}$. The accuracy of \eqref{eq:limsup_log_t} is also discussed in \cite{Kon10,Oli10}, where all known integers $n$ such that $t(n) > n^2$ are given, starting with $n=27$. At the time of writing, the verification process covers all integers $n$ up to $5.76 \times 10^{18}$, thanks to various optimization techniques like the use of search trees for the preselection of congruence classes \cite{Oli10}.

\subsection{Main result}

According to the next theorem, the expected dynamic behaviour of $T$ under iteration, described as above, may be derived from LBH with some refinement to the second order.

\begin{theorem} \label{th:dyn}
Assume LBH. Then there hold the upper bounds
\begin{equation} \label{eq:upper_bound_sigma}
\sigma_{\infty}(n) \leq \gamma_H \log n + O(\log \log n)
\end{equation}
and
\begin{equation} \label{eq:upper_bound_t}
 \log t(n) \leq 2 \log n + O(\log \log n)
\end{equation}
for integers $n \geq 2$, where
\begin{equation} \label{eq:gamma_H}
 \gamma_H = (\log 2 - r_{H} \log 3)^{-1} = 41.677647655\ldots 
\end{equation}
with $r_{H} = 0.609089\ldots$ defined as in Theorem \ref{th:hyp_ok}.
\end{theorem}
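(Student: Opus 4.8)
The plan is to exploit the exact multiplicative formula of Lemma~\ref{lem:ratio} together with the lower bound LBH, the interplay between the two being governed by the critical ones-ratio $r_H$. Note first that, under LBH, Lemma~\ref{lem:syracuse} guarantees that every trajectory reaches $1$, so $\sigma_\infty(n)$ and $t(n)$ are finite and the statement is meaningful. The basic quantitative input I would isolate is that, along any initial segment of length $\ell$ ending at $T^{(\ell)}(n)$ with $a$ odd terms $m_0,\dots,m_{a-1}$, Lemma~\ref{lem:ratio} gives
\[
\log \frac{T^{(\ell)}(n)}{n} = a\log 3 - \ell\log 2 + \sum_{k=0}^{a-1}\log\Bigl(1+\tfrac{1}{3m_k}\Bigr).
\]
Whenever the segment stays strictly above $1$ (so that the $m_k$ are \emph{distinct} odd integers exceeding $1$), the correction sum is controlled by $\tfrac13\sum 1/m_k \le \tfrac13\sum_{k\ge 1}(2k+1)^{-1} = O(\log \ell)$, exactly as in the proof of Theorem~\ref{th:hyp_ok}. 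Hence $\log(T^{(\ell)}(n)/n) = a\log 3 - \ell\log 2 + O(\log \ell)$, an identity I would use twice.

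For the total stopping time I would take $j=\sigma_\infty(n)$ and apply the identity to the whole trajectory, for which $T^{(j)}(n)=1$; writing $r=q/j$ this yields $\log n = (\log 2 - r\log 3)\,j + O(\log j)$. On the other hand LBH gives $\log n \ge (1-H(r))\,(\log 2)\,j - C\log j$. Eliminating $\log n$ between the two and dividing by $j$ leaves
\[
H(r)\log 2 \;\ge\; r\log 3 - O\!\left(\tfrac{\log j}{j}\right).
\]
Since $g(r):=H(r)\log 2 - r\log 3$ is strictly concave with $g(0)=g(r_H)=0$ and $g'(r_H)<0$ (precisely the characterization of $r_H$ used in Theorem~\ref{th:hyp_ok}), and since $g\ge 0$ on $[0,r_H]$, the displayed inequality forces $r \le r_H + O(\log j/j)$. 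Feeding this back into $\log n = (\log 2 - r\log 3)j + O(\log j)$ and using $\log 2 - r_H\log 3 = \gamma_H^{-1}>0$ gives $\log n \ge \gamma_H^{-1} j - O(\log j)$, i.e.\ $j \le \gamma_H\log n + O(\log j)$; a one-line bootstrap ($j=\Theta(\log n)$, so $\log j = O(\log\log n)$) turns this into \eqref{eq:upper_bound_sigma}.

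For the maximum excursion, let the peak be attained at $N=t(n)=T^{(i)}(n)$, with $a$ odd terms in the first $i$ steps and $u=a/i$. The segment identity applied to $n\to N$ gives $\log N - \log n = (u\log 3 - \log 2)\,i + O(\log i)$, while LBH applied to $n$ with $j=i$ gives $\log n \ge (1-H(u))(\log 2)\,i - C\log i$. The crux is the elementary pointwise inequality
\[
u\log 3 + H(u)\log 2 \le 2\log 2 \qquad (0\le u\le 1),
\]
whose maximum is attained at $u=3/4$; rearranged it reads $(u\log 3 - \log 2)\le (1-H(u))\log 2$. Multiplying by $i$ and comparing with the LBH bound yields $(u\log 3 - \log 2)\,i \le \log n + O(\log i)$, whence $\log N \le 2\log n + O(\log i) = 2\log n + O(\log\log n)$, which is \eqref{eq:upper_bound_t}.

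The main obstacle is bookkeeping the error terms uniformly rather than any single hard estimate. Three points require care: (i) justifying distinctness and $m_k>1$ so that $\sum 1/m_k = O(\log \ell)$, which rests on the trajectory reaching $1$ without earlier cycles and on $1$ not occurring before the peak; (ii) the passage from $g(r)\ge -\varepsilon$ to $r\le r_H+O(\varepsilon)$, which must be made quantitative near $r_H$ using $g'(r_H)\ne 0$ (noting $g\ge 0$ on $[0,r_H]$ so no spurious branch appears); and (iii) the self-consistent bootstrap establishing $\sigma_\infty(n)=\Theta(\log n)$ and $i=O(\log n)$, which is what converts every $O(\log j)$ or $C\log i$ into the stated $O(\log\log n)$. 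I would verify the peak inequality $u\log 3 + H(u)\log 2\le 2\log 2$ by a single derivative computation, observing that the equality case $u=3/4$ matches the heuristic that an excursion can at most square the starting value.
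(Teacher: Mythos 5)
Your proposal is correct, and the two delicate points you flag (distinctness of the pre-$1$ terms, which legitimizes the harmonic-sum bound, and the final bootstrap $j=O(\log n)$) are indeed the right ones and are resolvable exactly as you indicate. For the excursion bound \eqref{eq:upper_bound_t} your argument is essentially the paper's: same decomposition at the peak, same pointwise inequality $H(x)+\rho x\le 2$ with equality at $x=3/4$, same use of \eqref{eq:upper_bound_sigma} at the end; the only difference is that the paper takes $n$ to be the minimal term before the peak, so the correction factor is $\bigl(1+\frac{1}{3n}\bigr)^q$ and contributes $O(q/n)$, while you keep the harmonic-sum correction $O(\log i)$ --- both suffice. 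For \eqref{eq:upper_bound_sigma}, however, your route differs from the paper's. The paper splits into cases: for $0<r\le r_H$ it invokes the \emph{unconditional} Theorem \ref{th:hyp_ok} (inequality \eqref{eq:sharp}), and only for $r>r_H$ does it invoke LBH; in either case the relevant coefficient is $\ge(1-\rho r_H)\log 2=(1-H(r_H))\log 2=\gamma_H^{-1}$, so $j\le\gamma_H(\log n+C\log j)$ drops out with no need to locate $r$. You instead play the full-trajectory identity of Lemma \ref{lem:ratio} against LBH to force $r\le r_H+O(\log j/j)$ and then feed that back in. This works, but it requires the extra quantitative step you correctly identify --- passing from $g(r)\ge-\varepsilon$ to $r\le r_H+O(\varepsilon)$ via concavity of $g$ and $g'(r_H)<0$ --- which the paper's case split avoids entirely; in exchange, your variant isolates a stronger intermediate fact (any trajectory reaching $1$ has ones-ratio at most $r_H+O(\log j/j)$) that the paper never states, whereas the paper's split makes visible that LBH is needed only in the regime $r>r_H$. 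One cosmetic slip: your bound $\frac13\sum 1/m_k\le\frac13\sum_{k\ge1}(2k+1)^{-1}$ should read $\sum_{k=1}^{a}(2k+1)^{-1}$, since the infinite sum diverges.
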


\begin{proof}[Proof of \eqref{eq:upper_bound_sigma}]
Let $n \geq 2$. Assuming LBH, we have $\sigma_{\infty}(n) < \infty$, by Lemma \ref{lem:syracuse}. Set $j = \sigma_{\infty}(n)$, so that $T^{(j)}(n) = 1$.

Let $q$ be the number of odd terms among $n, T(n), \ldots, T^{(j-1)}(n)$.

The case $q=0$ is simply stated: $n=2^{j}$ and $j=(\log 2)^{-1} \log n$.

Consider now the case $0 < r=q/j \leq r_H$. Since $n, T(n), \ldots, T^{(j)}(n)$ are distinct, there holds formula \eqref{eq:sharp} in Theorem \ref{th:hyp_ok}, which in turn implies
\[ j \leq \frac{\log n + \frac 16 \log j}{(1-\rho r)\log 2}\]
with
\[(1-\rho r)\log 2 \geq (1-\rho r_H)\log 2 = \gamma_H^{-1}.\]

The case $r > r_H$ gives by assuming LBH
\[ j \leq \frac{\log n + C \log j}{(1-H(r))\log 2}\]
with $C \geq 0$ a constant, and
\[(1-H(r))\log 2 > (1-H(r_H))\log 2 = \gamma_H^{-1}.\]

Thus, in all cases, we obtain the upper bound
\[j \leq \gamma_H ( \log n + C \log j )\]
since $C > 1/6$, according to \eqref{eq:C_min}. It yields $ j = O(\log n)$, and we finally get
\[ \sigma_{\infty}(n) \leq \gamma_H \log n + O(\log \log n)\]
as claimed.
\end{proof}

\begin{proof}[Proof of \eqref{eq:upper_bound_t}]
Let $n \geq 3$ be an odd integer. Assuming LBH, we have $t(n) < \infty$, by Lemma \ref{lem:syracuse}. Let $j \geq 1$ such that $T^{(j)}(n)=t(n)$. We may suppose, without loss of generality, that $n$ is the smallest term among $n, T(n), \ldots, T^{(j-1)}(n)$.

Using Lemma \ref{lem:ratio}, we get
\[ \frac{t(n)}{n} \leq \frac{3^q}{2^j}  \left( 1 + \frac{1}{3 n} \right)^{q}\]
where $q$ is the number of odd terms in the iterated sequence going from $n$ to $T^{(j-1)}(n)$. Then we divide by $n$ and apply LBH:
\[ \frac{t(n)}{n^{2}} \leq j^{C} \cdot 2^{(\rho r + H(r) - 2)j} \cdot \left( 1 + \frac{1}{3 n} \right)^{q}\]
with $\rho = \log_{2} 3$ and $r = q/j$. Now one verifies that $H(x) + \rho x \leq 2$ for any $x$ where the equality holds if and only if $x=3/4$. It follows that 
\[ \frac{t(n)}{n^{2}} \leq j^{C} \cdot \left( 1 + \frac{1}{3 n} \right)^{q}\]
and, taking the logarithm,
\[ \log t(n) \leq 2 \log n + C \log j + \frac{q}{3n}\]
with the upper bound $\log(1+x) \leq x$. Since $q \leq j \leq \sigma_{\infty}(n)$, the inequality \eqref{eq:upper_bound_sigma} gives the claimed result
\[ \log t(n) \leq 2 \log n + O(\log \log n). \]
This completes the proof of Theorem \ref{th:dyn}.
\end{proof}
The above proof suggests that a maximum excursion record is more likely to occur when the ones-ratio of the sequence that goes from $n$ to $t(n)$ is approximately $3/4$. This prediction is in good agreement with the empirical data in Table \ref{tab:records}, mostly for long sequences.

Interestingly, the fact that $\gamma_H$ does not depend on the value of the constant $C$ in LBH further strengthens its relevance.

\subsection{From the random walk model to entropy}
\label{subsec:RW_model}

One may ask whether the constants $\gamma_{RW}$ and $\gamma_H$ are identical. Though they are seemingly the same, their respective definitions differ. Recall that $\gamma_{RW}$ originated in a model described in \cite{Lag92}, namely the \textit{random walk model}. 
 
 For each integer $n \geq 1$, Lagarias \& al consider a sequence of i.i.d. random variables $X(n,k)$, $k \geq 1$,  taking their values in the discrete set $\left\lbrace  \log2, \log \frac 23 \right\rbrace$ with the same probability $\frac 12$. Starting from $\log n$, each random variable represents a single step towards $-\infty$ within some additive random walk on a logarithmic scale. 
 
 Using Chernoff's bound from the theory of large deviations, it was stated that almost surely
 \[ \limsup_{n \to \infty} \frac{\min_{k \geq 1} \left\lbrace k : \log n - \sum_{i=1}^{k} X(n,i) \leq 0 \right\rbrace }{\log n} = \gamma_{RW}\]
 where $\gamma_{RW}$ is the unique solution with $\gamma_{RW} > \left( \frac 12 \log \frac 43 \right)^{-1}$ of the equation
\begin{equation} \label{eq:gamma_rw}
 \gamma_{RW} \cdot  g\left( \frac{1}{\gamma_{RW}} \right) = 1.
\end{equation}
The rate function $g$ above is the Legendre transform 
\begin{equation} \label{eq:rate_rw}
g(a) = \sup_{\theta \in \mathbb{R}} \left( a \theta - \log  M_{RW}(\theta)\right)
\end{equation}
defined for $\log \frac 23 < a < \log 2$, with the moment generating function 
\[M_{RW}(\theta) = \frac 12 \left(  2^{\theta} + \left(\frac 23 \right)^{\theta} \right).\]

We give a more simple expression of the rate function in the next lemma\footnote{Lemma \ref{lem:rw} relates the rate function to the binary entropy function. It can be derived directly by using another form of Chernoff's bound for the simple case of a Bernoulli distribution and based on the notion of relative entropy (see, e.g., \cite{Cov91} for this formulation). }.

\begin{lemma} \label{lem:rw}
Let $0 < r < 1$. Then
\begin{equation} \label{eq:rate_entropy}
 g(\log 2 - r \log 3) = \left( 1 - H(r) \right) \log 2
\end{equation}
 where $g$ is defined as in \eqref{eq:rate_rw} and $H$ is the binary entropy function.
\end{lemma}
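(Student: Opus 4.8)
The plan is to compute the Legendre transform in \eqref{eq:rate_rw} directly at the point $a = \log 2 - r\log 3$. Since $\log M_{RW}$ is the cumulant generating function of the non-degenerate two-point distribution placing mass $\tfrac12$ on each of the distinct atoms $\log 2$ and $\log\tfrac23$, it is strictly convex; hence $\theta \mapsto a\theta - \log M_{RW}(\theta)$ is strictly concave and its supremum is attained at the unique critical point $\theta^*$ solving $a = M_{RW}'(\theta)/M_{RW}(\theta)$.

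First I would locate $\theta^*$. Writing $u = 2^{\theta}$ and $v = (2/3)^{\theta}$, and using $\log\tfrac23 = \log 2 - \log 3$, the stationarity condition becomes
\[ a = \frac{u\log 2 + v(\log 2 - \log 3)}{u+v} = \log 2 - \frac{v}{u+v}\log 3. \]
Thus $a = \log 2 - r\log 3$ is equivalent to $v/(u+v) = r$, i.e. $3^{-\theta} = v/u = r/(1-r)$, which gives the finite value $\theta^* = \log\!\big((1-r)/r\big)/\log 3$ for every $0 < r < 1$.

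The cleanest way to finish is to read off the exponentially tilted weights. At $\theta^*$ the numbers $q_1 = u/(u+v) = 1-r$ and $q_2 = v/(u+v) = r$ attached to the atoms $\log 2$ and $\log\tfrac23$ form a probability distribution whose mean is precisely $a$, since $(1-r)\log 2 + r\log\tfrac23 = \log 2 - r\log 3$. The standard identity expressing the Legendre transform of a cumulant generating function as a Kullback--Leibler divergence (equivalently, Chernoff's bound in its relative-entropy form, as in \cite{Cov91}) then gives $g(a) = D(q\|p)$ with base distribution $p = (\tfrac12,\tfrac12)$:
\[ g(a) = (1-r)\log\frac{1-r}{1/2} + r\log\frac{r}{1/2} = (1-r)\log(1-r) + r\log r + \log 2. \]
Recognizing that $H(r)\log 2 = -r\log r - (1-r)\log(1-r)$, so that $(1-r)\log(1-r) + r\log r = -H(r)\log 2$, collapses the right-hand side to $(1-H(r))\log 2$, which is the claim.

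I expect the only friction to be bookkeeping: confirming that the raw quantity $a\theta^* - \log M_{RW}(\theta^*)$ really simplifies to the relative entropy $D(q\|p)$. If one prefers to avoid invoking that identity, the alternative is to substitute $\theta^*$ and $M_{RW}(\theta^*) = \tfrac12(u+v)$ back into $a\theta^* - \log M_{RW}(\theta^*)$ and reduce by hand; this is routine but slightly tedious, so the entropy interpretation is the route I would take.
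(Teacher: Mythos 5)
Your proof is correct, and its first half coincides with the paper's: both impose the stationarity condition on $\theta \mapsto a\theta - \log M_{RW}(\theta)$ and arrive at the same critical point, namely $r\left(3^{\theta^{*}}+1\right)=1$, i.e. $3^{\theta^{*}}=(1-r)/r$. The difference lies in how the value at $\theta^{*}$ is then evaluated. The paper substitutes $\theta^{*}$ back into $a\theta^{*}-\log M_{RW}(\theta^{*})$ and reduces algebraically (``we obtain after calculation''), which is exactly the ``routine but slightly tedious'' fallback you mention at the end; you instead recognize the tilted weights $q=(1-r,\,r)$ on the atoms $\log 2$ and $\log\tfrac 23$ as a probability distribution with mean $a$, and invoke the identity expressing the Legendre transform of a cumulant generating function as the relative entropy $D(q\,\Vert\,p)$ with $p=\left(\tfrac 12,\tfrac 12\right)$. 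This is precisely the formulation the paper itself relegates to a footnote (citing \cite{Cov91}) as an alternative derivation, so your finish is a legitimate and arguably cleaner shortcut rather than a gap; the KL identity is standard and immediate to verify here, since $D(q\,\Vert\,p)=\sum_i q_i\left(\theta^{*}x_i-\log M_{RW}(\theta^{*})\right)=a\theta^{*}-\log M_{RW}(\theta^{*})$ once $q$ has mean $a$. A small bonus of your write-up is that you justify attainment and uniqueness of the maximizer via strict convexity of $\log M_{RW}$, a point the paper passes over by simply supposing that some $\theta^{*}$ achieves the supremum.
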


\begin{proof}
Set $a = \log 2 - r \log 3$ and suppose that $\theta^{*}$ verify
\[g(a) = a \theta^{*} - \log  M_{RW} \left(\theta^{*}\right).\]
Writing the condition
\[ \frac{d}{d \theta} \left( a \theta - \log  M_{RW}(\theta) \right) \Bigr|_{\theta=\theta^{*}}
   = 0 \]
leads to the relation
\[ a - \log 2 + \frac{\log 3}{3^{\theta^{*}} + 1} = 0, \]
which simplifies to
\[ r \left( 3^{\theta^{*}} +1 \right) = 1.\]
We obtain after calculation
\[ g(a) = r \log r + (1-r) \log(1-r) + \log 2. \qedhere\] 
\end{proof}

\begin{corollary}
Let $\gamma_{RW}$ be defined by \eqref{eq:gamma_rw}, and let $\gamma_H = (\log 2 - r_{H} \log 3)^{-1}$ with $r_{H} = 0.609\ldots$ such that
$H(r_{H}) \log 2 = r_{H} \log 3.$ Then
\begin{equation} \label{eq:gammas}
 \gamma_{RW} = \gamma_H. 
\end{equation}
\end{corollary}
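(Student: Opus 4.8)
The plan is to reduce the defining relation \eqref{eq:gamma_rw} for $\gamma_{RW}$ to the equation \eqref{eq:rH} characterizing $r_H$, using Lemma \ref{lem:rw} as the bridge. Writing the fixed-point equation \eqref{eq:gamma_rw} in the equivalent form $g(1/\gamma_{RW}) = 1/\gamma_{RW}$, I would introduce the variable $a = 1/\gamma_{RW}$. The constraint $\gamma_{RW} > (\frac{1}{2}\log\frac{4}{3})^{-1}$ together with $\gamma_{RW} > 0$ places $a$ in the open interval $(0, \frac{1}{2}\log\frac{4}{3})$, which lies inside the domain $(\log\frac{2}{3}, \log 2)$ of $g$. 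Since $r \mapsto \log 2 - r\log 3$ is a decreasing bijection from $(0,1)$ onto $(\log\frac{2}{3}, \log 2)$, I can write $a = \log 2 - r\log 3$ for a unique $r \in (0,1)$.

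First I would substitute this parametrization into $g(a) = a$ and invoke Lemma \ref{lem:rw} to evaluate $g(a) = (1 - H(r))\log 2$. The fixed-point equation then reads $(1 - H(r))\log 2 = \log 2 - r\log 3$, which after cancelling $\log 2$ collapses to $H(r)\log 2 = r\log 3$. This is exactly the relation \eqref{eq:rH} defining $r_H$, so the fixed-point problem in the $\gamma$-variable is, under the change of variables, literally the equation that defines the ones-ratio threshold.

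Next I would identify which root is selected. By the strict concavity of $H$ recalled in the proof of Theorem \ref{th:hyp_ok}, the equation $H(r)\log 2 = r\log 3$ has exactly two solutions on $[0,1)$: the trivial root $r = 0$ and the non-zero root $r_H$. The boundary value $a = \frac{1}{2}\log\frac{4}{3} = \log 2 - \frac{1}{2}\log 3$ corresponds through $a = \log 2 - r\log 3$ to $r = 1/2$, so, because $a$ is decreasing in $r$, the constraint $a < \frac{1}{2}\log\frac{4}{3}$ forces $r > 1/2$. Since $r_H = 0.609\ldots > 1/2$ while $r = 0$ is excluded, the admissible root is $r = r_H$. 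Hence $a = \log 2 - r_H\log 3$ and $\gamma_{RW} = 1/a = (\log 2 - r_H\log 3)^{-1} = \gamma_H$, which is the claim.

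The main point requiring care is the branch bookkeeping: one must confirm that the inequality $\gamma_{RW} > (\frac{1}{2}\log\frac{4}{3})^{-1}$ singling out $\gamma_{RW}$ corresponds, under the change of variables, precisely to the branch $r > 1/2$ that contains $r_H$ and excludes the spurious root $r = 0$. This hinges on the identity $\frac{1}{2}\log\frac{4}{3} = \log 2 - \frac{1}{2}\log 3$, which shows that the threshold in the $\gamma$-variable is exactly the image of $r = 1/2$; once this is in place, everything else is a direct computation via Lemma \ref{lem:rw}.
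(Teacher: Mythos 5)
Your proof is correct and takes essentially the same route as the paper: the substitution $\gamma_{RW} = (\log 2 - r\log 3)^{-1}$, the application of Lemma \ref{lem:rw} to reduce the fixed-point equation \eqref{eq:gamma_rw} to $H(r)\log 2 = r\log 3$, and the identification $r = r_H$ by uniqueness of the non-zero root. Your explicit verification that the constraint $\gamma_{RW} > \left(\frac{1}{2}\log\frac{4}{3}\right)^{-1}$ translates to $r > \frac{1}{2}$ (via $\frac{1}{2}\log\frac{4}{3} = \log 2 - \frac{1}{2}\log 3$) is a welcome elaboration of a point the paper states without justification.
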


\begin{proof}
Setting $\gamma_{RW} = \left( \log 2 - r_{RW} \log 3 \right) ^{-1}$, we get from \eqref{eq:gamma_rw} and Lemma \ref{lem:rw} the equation
\[ H(r_{RW}) \log 2 = r_{RW} \log 3\]
with $r_{RW} > \frac 12$. Thus, $r_{RW} = r_H$, since $r_H$ satisfies the same equation that has a unique positive solution (see Theorem \ref{th:hyp_ok}). The expected result \eqref{eq:gammas} follows.
\end{proof}

The authors of \cite{Lag92} also consider another stochastic model, namely the \textit{branching process model}. A tree is build from a family of Bernoulli processes that imitate the backward iterations of the $T$ function on a logarithmic scale. Applying a theorem of Biggins based on a Chernoff's bound for Galton-Watson processes, this model predicts that the left-hand side in \eqref{eq:limsup_sigma} equals a constant $\gamma_{BP}$ . Then it is shown that $\gamma_{BP} = \gamma_{RW}$, which is quite satisfying.

The identities $\gamma_{BP} = \gamma_{RW} = \gamma_H$  suggest that LBH is in full agreement with the predictions of both the random walk and the branching process models. As a possible explanation, one might consider that the heuristic reasonings leading to all of them are somehow related.

\section{A random model of uniform distribution}
\label{sec:model}

The assumptions made in \S\ref{subsec:heuristic} give no indication on the value of the constant $C$ in LBH. To this end, let us consider a random model\footnote{This model is more simple than the random walk model \cite{Lag92}. We point out that, in our model, the random representation of $I_{0}(j,q)$ has exactly $\binom{j}{q}$ elements, whereas, in the random walk model, the number of sequences of length $j$, starting from $\log n$ with $n \leq 2^j$, and having $q$ terms considered as ``odd" is a Gaussian random variable with mean $\binom{j}{q}$. Yet we expect that those models lead to similar predictions regarding LBH.}  where the elements $n \in I_{0}(j,q)$ are represented by a set of independent random variables $\left\lbrace X_{j,q,i} : i=1,2, \ldots, \binom{j}{q} \right\rbrace $ having a continuous uniform distribution on the interval $[0, 2^j ]$.

Let $P(j,q)$ denote the probability that 
\begin{equation}\label{eq:random_var}
 X_{j,q,i} < j^{-C} \cdot 2^{(1-H(r)) j}
\end{equation}
where $i,j,q,r$ are taken such that $0 \leq q \leq j$ ($j \neq 0$), $1 \leq i \leq \binom{j}{q}$ and $r = q/j$. The value of $P(j,q)$ obviously does not depend on $i$:
\[P(j,q) = j^{-C} \cdot 2^{-H(r) j} .\]
Let us introduce the infinite sum of probabilities
\[ S(C) = \sum_{j=1}^{\infty} \sum_{q=0}^{j} \binom{j}{q} P(j,q), \]
which estimates the number of times the inequality \eqref{eq:random_var} is satisfied over all admissible values of $i,j,q$.
By the Stirling formula, we have
\begin{equation}\label{eq:stirling}
\binom{j}{q} \sim \frac{2^{H(r) j}}{\sqrt{2 \pi r(1-r)j}} .
\end{equation}
for $\varepsilon \leq r \leq 1 - \varepsilon$, with $\varepsilon >0$ fixed. Then we get the approximations
\[ \binom{j}{q} P(j,q) \sim \frac{j^{-\frac 12 - C}}{\sqrt{2 \pi r(1-r)}} \]
and
\[ \sum_{q=0}^{j} \binom{j}{q} P(j,q) \sim \frac{j^{\frac 12 -C}}{\sqrt{2\pi}}  \int_{0}^{1}\frac{dx}{\sqrt{x(1-x)}} = \sqrt{\frac{\pi}{2}} \cdot j^{\frac 12 -C}. \]
One may verify (e.g., by developing the Stirling series to the next order) that the latter approximation is still valid when summing on $q$.

We infer that $S(C) < \infty$ if and only if $C > 3/2$, by considering the conditional convergence of the Riemann Zeta function on the real line. Thus, almost surely, inequality \eqref{eq:random_var} occurs at most finitely many times when $C > 3/2$, as a consequence of the Borel-Cantelli lemma.

This simple model suggests that LBH is likely to be true for any $C>3/2$ and all $j$ sufficiently large. Nevertheless, there is no randomness in the sets $I_{0}(j,q)$, and the previous estimation of the plausible values of $C$ may be flawed for various reasons\footnote{See also \cite[p. 153]{Kon10} for a similar discussion on the random walk model.}:
\begin{enumerate}[\upshape (i)]
\item The elements of $I_{0}(j,q)$ have a lower bound given by Lemma \ref{lem:hyp_ok0} when $r \leq 1/2$, and all constant $C \geq 0$ is admissible in that case.
\item The values $\min I_{0}(j,q)$ and $\min I_{0}(j+1,q)$ are often equal, and thus, correlated. For example, 
\[\min I_{0}(50,30) = \min I_{0}(51,30) = 103.\]
\item The values $\min I_{0}(j,q)$ and $\min I_{0}(j,q+1)$ are interdependent when leading to coalescent sequences after a few iterations.
\end{enumerate}

Therefore, it remains plausible that LBH holds for a constant $C$ lower than $3/2$. Yet, the exact value of $C$ does not matter in most cases, as the first order in the lower bound \eqref{eq:min} is the exponential term.

\section{A particular case}

\subsection{Effective lower bound}

On a theoretical level, we know very little about the smallest element of the set $I_{0}(j,q)$ in all cases where $\log_3 2 < q/j < 1$, which relates to sequences that tend to grow and have at least one even term. Here we briefly investigate the most simple of those cases, that is $q=j-1$, for which $\# I_{0}(j,j-1) = j$.

\begin{lemma}
Let $j \geq 2$. Then $I_{0}(j,j-1) = \left\lbrace n_{j,k} \right\rbrace_{k=0}^{j-1} $ with \[n_{j,0} = 2^{j} -2 \quad \text{and} \quad n_{j,k} = \left( \frac 23 \right) ^{k} \left( b_{k}(j-k) \cdot 2^{j-k} -1 \right) -1\] for $1 \leq k \leq j-1$, where $b_{k}(l) = 2^{-l} \mod 3^k$. Moreover, the set $I_{0}(j,j-1)$ is bounded from below by
\begin{equation} \label{eq:lower_bound_njk}
 2^{j/(1+\rho)} - 2
\end{equation}
with $\rho = \log_2 3$.
\end{lemma}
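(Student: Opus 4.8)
The plan is to pin down each of the $j$ elements of $I_0(j,j-1)$ by the position of its unique even iterate, and then minimise the resulting closed form over that position. By Theorem~\ref{th:terras} the set has exactly $\binom{j}{j-1}=j$ elements, one per admissible parity vector, and the relevant parity vectors are exactly those with a single $0$-entry. Writing $k\in\{0,\dots,j-1\}$ for the position of that even term, the corresponding member of $I_0(j,j-1)$ is the unique $n\in\{1,\dots,2^j\}$ whose first $j$ iterates are odd except at step $k$; I would compute it for each $k$.

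Setting $n+1=2^v m$ with $m$ odd, an induction on the odd rule $T^{(i+1)}=(3T^{(i)}+1)/2$ gives $T^{(i)}(n)=3^i 2^{\,v-i}m-1$ while all earlier terms remain odd; this is odd for $i<v$ and even for $i=v$, so the first even iterate sits at $k=v_2(n+1)$ with $T^{(k)}(n)=3^k m-1$. One halving gives $T^{(k+1)}(n)=(3^k m-1)/2$, and $l$ further odd steps give $T^{(k+1+l)}(n)=3^l(3^k m+1)/2^{\,l+1}-1$. Comparing $2$-adic valuations shows these are odd precisely while $l\le v_2(3^k m+1)-2$, so the demand that steps $k+1,\dots,j-1$ all be odd is equivalent to the single congruence
\[
 3^k m \equiv -1 \pmod{2^{\,j-k}} .
\]
Taking the least admissible $m$ and writing $3^k m+1=2^{\,j-k}b$, reduction modulo $3^k$ forces $b\equiv 2^{-(j-k)}\pmod{3^k}$, i.e.\ $b=b_k(j-k)$; inserting $m=(2^{\,j-k}b_k(j-k)-1)/3^k$ into $n=2^k m-1$ returns the stated expression. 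I would note in passing that $m$ is automatically odd, being a quotient of odd integers, so no parity constraint is silently dropped, and that $k=0$ is genuinely special (there $b$ would live modulo $3^0=1$): handling it directly gives $m\equiv-1\pmod{2^j}$ and $n_{j,0}=2^j-2$.

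For the lower bound I would read two elementary estimates off the formula. From $m\ge1$ one gets $n_{j,k}+1=2^k m\ge 2^k$, and from $b_k(j-k)\ge1$ one gets $n_{j,k}+1\ge 2^k(2^{\,j-k}-1)/3^k$, hence $n_{j,k}\ge 2^j/3^k-2$. The first estimate grows in $k$ and the second decays in $k$, and they meet at $k^\ast=j/(1+\rho)$, where $2^{k^\ast}=2^j/3^{k^\ast}=2^{\,j/(1+\rho)}$ because $3=2^\rho$. Splitting the range of $k$ at $k^\ast$ and invoking whichever estimate is favourable then yields $n_{j,k}\ge 2^{\,j/(1+\rho)}-2$ for every $k$, the value $k=0$ being covered trivially by $2^j-2$.

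I expect the real work to lie in the characterisation rather than in the optimisation: one must check carefully that the numerous oddness conditions on the terms beyond position $k$ collapse to the one congruence modulo $2^{\,j-k}$, that the minimal solution $m$ is exactly the one singled out by $b_k(j-k)$, and that it is automatically odd and produces $n<2^j$, so that the list $\{n_{j,k}\}$ is genuinely all of $I_0(j,j-1)$ and not just a subset.
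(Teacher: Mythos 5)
Your proposal is correct and takes essentially the same route as the paper: both index the $j$ elements of $I_0(j,j-1)$ by the position $k$ of the unique even term, invoke the count $\#I_0(j,j-1)=\binom{j}{j-1}=j$ from Theorem~2.1 for completeness, and prove the bound \eqref{eq:lower_bound_njk} by combining the two estimates $n_{j,k}\geq 2^k-1$ and $n_{j,k}\geq 2^j/3^k-2$, optimized at $k=j/(1+\rho)$. The only difference is one of direction, not substance: the paper verifies the stated formula for $n_{j,k}$ by computing its iterates (the step it calls ``straightforward to state''), whereas you derive that formula by solving the congruence $3^k m\equiv -1\pmod{2^{j-k}}$ for the least admissible $m$.
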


\begin{proof}
It is straightforward to state that the first $j-1$ iterates of $n_{j,k}$ are odd integers, except $T^{(k)}\left( n_{j,k} \right)$. We have indeed 
\[T^{(k)}\left( n_{j,k} \right) = b_{k}(j-k) \cdot 2^{j-k} - 2\] for $1 \leq k \leq j-1$.
Thus, it suffices to observe that $1 \leq n_{j,k} \leq 2^{j}$ for all $k$ to conclude that the $j$ elements of $I_{0}(j,j-1)$ are the $n_{j,k}$ integers.

Now we are left to prove \eqref{eq:lower_bound_njk}. On the one hand, we can write 
\begin{equation} \label{eq:lb_njk_1}
 n_{j,k} \geq \left( \frac 23 \right) ^{k} \left( 2^{j-k} -1 \right) -1 \geq \frac{2^j}{3^k}-2 = 2^{j-\rho k} - 2 
\end{equation}
for $1 \leq k \leq j-1$, by using $b_{k}(l) \geq 1$. On the other, 
\begin{equation} \label{eq:lb_njk_2}
 n_{j,k} = 2^k \left( \frac{b_{k}(j-k) \cdot 2^{j-k} -1}{3^k}  \right) -1 \geq 2^k - 1 
\end{equation}
which follows from the fact that $(b_{k}(l) \cdot 2^{l} -1)$ is a non-zero multiple of $3^k$ for any $l \geq 1$. Putting \eqref{eq:lb_njk_1} and \eqref{eq:lb_njk_2} together yields
\[ n_{j,k} \geq 2^{\max ( k, j-\rho k)} - 2.\]
The lower bound 
\[ \min_{1 \leq k \leq j-1} \max ( k, j-\rho k) \geq \frac{j}{1 + \rho}\] 
completes the proof.
\end{proof}

The effective lower bound \eqref{eq:lower_bound_njk} has an exponential growth, but it is quite weak compared to LBH which asserts that
\begin{equation} \label{eq:lower_bound_njk_hyp}
 \min I_{0}(j,j-1) \geq j^{-C} \cdot 2^{\left( 1-H(1-1/j) \right) j} \sim e^{-1} \cdot j^{-(C+1)} \cdot 2^j .
\end{equation}
A simple calculation shows that the assumption \eqref{eq:lower_bound_njk_hyp} on the $n_{j,k}$ integers leads to the roughly equivalent lower bound
\begin{equation} \label{eq:conj_bkl}
 b_{k}(l) \geq \frac{3^{k}}{ e \cdot (l+k)^D} \quad \text{ for all $k,l$ positive integers, }
\end{equation}
 where $D \approx C+1$ is a constant. To our knowledge, proving \eqref{eq:conj_bkl} is a non-trivial problem in number theory.

As the multiplicative group $(\mathbb{Z}/3^k\mathbb{Z})^{*}$ is cyclic of order $2 \cdot 3^{k-1}$, the $b_{k}$ functions are periodic with period $2 \cdot 3^{k-1}$, and there holds
\[b_{k}(2 \cdot 3^{k-1}) = 1.\]
The inverse functions $b_{k}^{-1}$ are the discrete logarithms in base 1/2, modulo $3^k$. Thus, the hypothetical lower bound  \eqref{eq:conj_bkl} may be related to the distribution of discrete logarithms \cite{Gib12}, which is expected to be uniform.

\subsection{Further numerical results}
In order to test numerically LBH in that particular case, we checked the lower bound \eqref{eq:min} by setting $q=j-1$, $n=n_{j,k}$  and $C=0$ for all $0 \leq k < j \leq 10000$. When it was falsified, which occurred 3741 times, then we computed $c(n)$, where the $c$ function is defined as in \S\ref{subsec:comp}. Here we only give the three highest values found:
\[ c\left( n_{85,56} \right) = 0.865\ldots,\]
\[ c\left( n_{2858,1270} \right) = 0.817\ldots,\]
\[ c\left( n_{5461,488} \right) = 0.813\ldots.\]
The three above $n_{j,k}$ integers have 22, 854 and 1637 decimal digits, in that order. Let us mention that the corresponding $c(n_{j,k})$ values have been obtained after exactly $j$ iterations. These numerical results do not improve the bound \eqref{eq:C_min}, supporting the idea that LBH may hold with $C$ near 1.


\section*{Acknowledgements}
I wish to acknowledge for the fruitful correspondence with Claude Terracol that originated this research. I also thank Christian Boyer, Nik Lygeros and Cl\'ement Narteau for their positive influences, and the referee for his comments and suggestions.



\begin{thebibliography}{99}

\normalsize
\baselineskip=17pt

\bibitem{Bor01}
K. A. Borovkov, D. Pfeifer,
{\em Estimates for the Syracuse problem via a probabilistic model},
Theory Probab. Appl. 45 (2001), 300--310.

\bibitem{Cov91}
T. M. Cover, J. A. Thomas, 
{\em Elements of Information Theory}, John Wiley, New York, 1991.

\bibitem{Cra78}
R. E. Crandall,
{\em On the $3x+1$ problem}, 
Math. Comp. 32 (1978), 1281--1292.

\bibitem{Eli93}
S. Eliahou, 
{\em The $3x + 1$ problem: new lower bounds on nontrivial cycle lengths},
Discrete Math. 118 (1993), 45--56.

\bibitem{Eve77}
C. J. Everett,
{\em Iteration of the number theoretic function $f(2n)=n, f(2n+1)=3n+2$},
Adv. Math. 25 (1977), 42--45.

\bibitem{Gib12}
D. J. Gibson,
{\em Discrete logarithms and their equidistribution},
Unif. Distrib. Theory 7 (2012), 147--154.

\bibitem{Kon10}
A. V. Kontorovich, J. C. Lagarias,  
{\em Stochastic models for the $3x+ 1$ and $5x+ 1$ problems and related problems}, published in \cite{Lag10}, 131--188.

\bibitem{Lag85}
J. C. Lagarias,
{\em The $3x+1$ problem and its generalizations},
Amer. Math. Monthly 92 (1985), 3--23.

\bibitem{Lag92}
J. C. Lagarias, A. Weiss,
{\em The $3x + 1$ problem: two stochastic models},
Ann. Appl. Probab. 2 (1992), 229--261.

\bibitem{Lag10}
J. C. Lagarias, editor,
{\em The ultimate challenge: The 3x+ 1 problem}, Amer. Math. Soc., 2010.

\bibitem{Oli10}
T. Oliveira e Silva,
{\em Empirical verification of the $3x + 1$ and related conjectures}, published in \cite{Lag10}, 189--207.

\bibitem{Sin03}
Y. G. Sinai, 
{\em Statistical $(3x+1)$ problem},
Commun. Pure Appl. Math. 56 (2003), 1016--1028.

\bibitem{Tao11}
T. Tao,
{\em The Collatz conjecture, Littlewood-Offord theory, and powers of 2 and 3}, blog post published August 25, 2011 at \\
\url{http://terrytao.wordpress.com/2011/08/25} .

\bibitem{Ter76}
R. Terras,
{\em A stopping time problem on the positive integers},
Acta Arith. 30 (1976), 241--252.

\bibitem{Wir98}
G. J. Wirsching, 
{\em The Dynamical System Generated by the $3n + 1$ Function}, Lecture Notes in Math. 1681, Springer, 1998.

\end{thebibliography}
\end{document}